\documentclass[11pt]{amsart}

\pdfmapfile{}
\pdfmapfile{+cmextra.map}
\pdfmapfile{+symbols.map}
\pdfmapfile{+lm.map}

\usepackage[T1]{fontenc}
\usepackage{lmodern}
\usepackage[a4paper,margin=30mm]{geometry}
\usepackage{microtype}
\usepackage{mathtools}
\usepackage{amssymb}
\usepackage{enumitem}
\usepackage{xcolor}
\usepackage[colorlinks=true,linkcolor=blue!55!black,citecolor=blue!55!black,
  urlcolor=blue!65!black]{hyperref}
\usepackage[nameinlink,noabbrev]{cleveref}

\newtheorem{theorem}{Theorem}[section]
\newtheorem{lemma}[theorem]{Lemma}
\newtheorem{proposition}[theorem]{Proposition}
\newtheorem{problem}[theorem]{Problem}
\newtheorem{corollary}[theorem]{Corollary}
\newtheorem{fact}[theorem]{Fact}
\theoremstyle{definition}
\newtheorem{definition}[theorem]{Definition}
\theoremstyle{remark}
\newtheorem{remark}[theorem]{Remark}

\DeclareMathOperator{\Stab}{Stab}
\DeclareMathOperator{\acl}{acl}
\DeclareMathOperator{\tp}{tp}

\def\Ind#1#2{#1\setbox0=\hbox{$#1x$}\kern\wd0\hbox to 0pt{\hss$#1\mid$\hss}
\lower.9\ht0\hbox to 0pt{\hss$#1\smile$\hss}\kern\wd0}
\def\Notind#1#2{#1\setbox0=\hbox{$#1x$}\kern\wd0\hbox to 0pt{\mathchardef
\nn="3236\hss$#1\nn$\kern1.4\wd0\hss}\hbox to 0pt{\hss$#1\mid$\hss}\lower.9\ht0
\hbox to 0pt{\hss$#1\smile$\hss}\kern\wd0}
\def\ind{\mathop{\mathpalette\Ind{}}}

\newcommand{\U}{\mathbb U}
\newcommand{\Gm}{\Gamma}

\title[A counterexample to Petrykowski's conjecture]
  {A Counterexample to Petrykowski's Conjecture}
\date{27 August 2026}
\author{Artem Chernikov}

\begin{document}

\begin{abstract}
We construct a simple expansion of the theory of nonabelian free
groups in which the group admits a global type with bounded left-translation
orbit but is not definably amenable.  This extends the construction of Chernikov, Hrushovski, Kruckman, Krupi\'nski, Moconja, Pillay, and Ramsey, and gives a counterexample to  Petrykowski's conjecture.
\end{abstract}

\maketitle

\section{Introduction}

Petrykowski's conjecture states that if a definable group \(G\) admits a
global type \(p\in S_G(\U)\) whose orbit under left translation is bounded,
then \(G\) is definably amenable \cite[Conjecture~0.1]{Newelski}. Throughout, \(\U\) denotes a sufficiently saturated and strongly homogeneous
monster model of cardinality \(\bar\kappa\) with
\(2^\lambda<\bar\kappa\) for every \(\lambda<\bar\kappa\). Its
background lies in Newelski and Petrykowski's work on group coverings and
weakly generic types \cite{NPcover,NPweak}.  Petrykowski proposed that a
bounded orbit should suffice for definable amenability, and Newelski recorded
the conjecture and proved it under stronger hypotheses that
the space of global weakly generic types is absolutely bounded by $2^{\aleph_0}$ \cite{Newelski}. The case of fsg NIP groups provided an important precursor: invariant
measures there are obtained by combining a global generic type with Haar
measure on \(G/G^{00}\) \cite{hrushovski2008groups,conversano2012connected}.
 Conversano and Pillay proved the conjecture for groups definable in
o-minimal expansions of real closed fields
\cite[Corollary~4.12]{conversano2012connected}. It was proved for all NIP groups in \cite[Theorem~3.12]{CS}, where, under NIP, definable amenability is shown to be 
equivalent to the existence of a bounded orbit. It also holds (trivially) for groups definable in small theories, since every such
group is definably amenable \cite[Corollary~4.14]{CHK}.  The conjecture
nevertheless remained open in general outside NIP; see also
\cite{stonestrom2026f}.    In this paper we
show that it fails, even when the ambient theory is simple:
\begin{theorem}\label{thm:simple-main}
There  exist a complete simple first-order theory \(T\), a
 definable group \(G\) and a global type
\(p\in S_G(\U)\) such that $|G\cdot p| \leq 2^{\aleph_0}$, 
but \(G\) admits no left-invariant Keisler measure.  \end{theorem}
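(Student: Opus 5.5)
The plan is to start from the example of Chernikov, Hrushovski, Kruckman, Krupiński, Moconja, Pillay and Ramsey. There the theory of nonabelian free groups \(F\) (which is stable, by Sela) is expanded by a generic predicate, or a generic family of predicates, added via a simple-preserving expansion (a Winkler-style generic expansion, i.e.\ the model completion of \(T\) with a new unary predicate \(P\), which is simple because \(T_{\mathrm{free}}\) is stable and eliminates \(\exists^\infty\) in the relevant sorts). In that expansion \(G=F\) is not definably amenable. The mechanism is a paradoxical decomposition: nonamenability of \(F\) is witnessed by a finite family of translates \(g_iP\) and their complements, which cover \(G\) in a way incompatible with any invariant finitely additive probability. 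We want to keep this, so the first step is to recall or re-prove that in the expansion \(T_P\) the definable sets \(P\), \(xP\), and so on satisfy, uniformly, a paradoxical configuration. Concretely, with \(a,b\) free generators, the sets \(P_1,\dots,P_4\), defined by Boolean combinations of \(P\) and its translates, should partition \(G\) with \(G=P_1\cup aP_2=P_3\cup bP_4\). This gives the absence of a left-invariant Keisler measure directly, by the usual computation \(\mu(G)\ge 2\mu(G)\).

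The second step is to produce a type with a bounded orbit. Here I would modify the construction so that \(P\) is not fully generic. I would impose axioms forcing \(P\) to be determined, up to a "small" generic perturbation, by a map to a compact group such as \(\{\pm1\}^{\omega}\) or a profinite quotient \(G/N\). The idea is to arrange that \(G^{00}\) (for the expanded theory) has index at most \(2^{\aleph_0}\), and that some global type \(p\) concentrates on a coset and is invariant under translation by \(G^{00}\). A cleaner route is to take a generic type \(p\) of the stable reduct \(F\), whose stabilizer there is all of \(F\), since \(F\) is connected and generic types in \(T_{\mathrm{free}}\) are unique over models by Poizat/Pillay–Sklinos. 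Then \(p\) is extended to \(T_P\) by declaring the \(P\)-values of \(p\) and of all its translates according to a pattern indexed by a bounded set. The orbit \(G\cdot p\) is then governed by how \(g\) acts on this extra data, and we want that action to factor through a group of size at most \(2^{\aleph_0}\). The bound \(|G\cdot p|\le 2^{\aleph_0}\) then follows from counting the possible completions: the \(T_{\mathrm{free}}\)-part is fixed, and the \(P\)-part over a realization is coded by a countable pattern.

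The third step is to verify simplicity of the modified expansion. I would verify it by describing independence as "\(\ind\) in \(T_{\mathrm{free}}\) after adding the relevant parameters," and checking the independence theorem via amalgamation of the predicate on algebraically independent configurations, as in the Chatzidakis–Pillay generic-predicate analysis.

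The main obstacle is reconciling steps one and two. The paradoxical decomposition must survive after constraining \(P\) enough to get a bounded orbit, and the constraint must hold at the level of types of nonstandard generics while the paradoxical sets are definable. I expect to handle this by letting \(P\) be generic on "standard-like" elements, so that the paradox is witnessed inside \(F\) using the ping-pong lemma on reduced words, while being rigid on realizations of the generic type, so that the orbit stays bounded. Verifying that these two regimes are separated by a type-definable condition, and that amalgamation still works across them, is where I would expect most of the work to lie.
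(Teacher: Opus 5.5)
Your proposal has two genuine gaps, and the second is the heart of the theorem.

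\textbf{Step one does not hold as stated.} The construction in \cite{CHK} is not a Chatzidakis--Pillay/Winkler generic predicate on a free group. It expands $\mathrm{SL}_2(K)$ by four colors $C_1,\dots,C_4$ subject to \emph{covering axioms} $G=\bigcup_j a_{ij}^{-1}C_i$.

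A genuinely generic predicate $P$ cannot give a paradoxical decomposition by Boolean combinations of its translates. Take $x$ generic over a model containing the parameters. The finitely many points $sx$ involved are then distinct and new, and genericity lets $P$ take any pattern on them. So identities such as $G=P_1\cup aP_2$ or ``$P_1,\dots,P_4$ partition $G$'' must be Boolean tautologies. The translation-invariant Bernoulli product measure on patterns then gives $\mu(P_1)+\mu(P_2)\geq 1$ and $\mu(P_3)+\mu(P_4)\geq 1$, while $\sum_k\mu(P_k)=1$, a contradiction. Ping-pong sets in the standard model $F$ do not help either, since they are not definable.

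So the covering relations must be imposed as axioms ($T_0^{\mathrm{col}}$). You then have to prove from scratch that the model companion exists and is simple; the Chatzidakis--Pillay analysis does not cover this. The paper does it with three tools:
\begin{enumerate}[label=(\roman*)]
\item a finite extension test: colors prescribed at $n$ points extend to a total good coloring iff they extend to a good partial coloring of the radius-$(2^{n+1}-1)$ ball around them;
\item nfcp of the free group (Sklinos), used to make ``some solution has its coordinates $z_s$, $s\in S$, outside $G(M)$'' first-order;
\item relative homogeneity (color-preserving $L_0$-isomorphisms of $\acl_0$-closed sets are elementary) together with gluing of good colorings over $T_0$-independent sets, which yields the independence theorem.
\end{enumerate}

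\textbf{Step two is left open, and the devices you suggest do not work.}
\begin{enumerate}[label=(\alph*)]
\item The stabilizer of $p$ in the expansion must be a proper subgroup of bounded index. If it were all of $G$, $p$ would be an invariant $\{0,1\}$-valued Keisler measure.
\item Such a subgroup cannot be read off the free-group structure: the free group is connected, so its finite-index subgroups are not definable. Axioms making $P$ ``factor through a profinite quotient'' would require new structure whose stability and nfcp you would have to establish.
\item ``$P$ is rigid on realizations of the generic type'' is not a first-order condition, and it conflicts with the genericity needed for amalgamation.
\item The count ``the $P$-part is coded by a countable pattern'' is unjustified. By relative homogeneity, a global type is determined by the $L_0$-data together with the coloring of all of $G(\acl_0(\U c))$.
\end{enumerate}

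\textbf{How the paper gets the bounded orbit.} It does not change the theory at all. It takes $H=\U\cap\bigcap_n\ker q_n^*$, the kernel of the map $\U\to\prod_n F/N_n$ over all finite-index normal subgroups $N_n\trianglelefteq F$, so $[\U:H]\leq 2^{\aleph_0}$. It places $\U\preccurlyeq F^I/\mathcal D\preccurlyeq (F*\langle c\rangle)^I/\mathcal D$ and uses the coordinatewise automorphisms $\alpha_h$ induced by $\nu_{h_i}(c)=h_ic$. These fix $F^I/\mathcal D$ pointwise, send $c$ to $hc$, and preserve every extended quotient map. Residual finiteness then makes $\Gamma$ act freely on the orbits of $K=\langle\alpha_h:h\in H\rangle$. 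Hence a $K$-invariant good coloring extending that of $\U$ exists, and relative homogeneity gives $c\equiv_{\U}hc$, i.e.\ $H\leq\Stab(p)$.

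Your profinite-quotient intuition points in the right direction. It is exactly why a residually finite group must replace $\mathrm{SL}_2(K)$, which has no proper bounded-index subgroups. But without this construction, or a substitute producing a type with bounded-index stabilizer, the proposal does not prove the theorem.
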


Our construction builds directly on the earlier construction of a non-definably amenable group in a simple theory \cite{CHK}. That construction
expands an algebraically closed field \(K\) by a generic good coloring of
\(G=\mathrm{SL}_2(K)\), encoded by four predicates
\(C_1,\ldots,C_4\).  It does not itself yield a bounded-orbit type; see
\Cref{rem: SL2 no bdd orbit}.  We retain the same good-coloring scheme and combinatorics (four colors,
with three prescribed left translates covering the group for each color), 
hence the same obstruction to a left-invariant Keisler measure, but generalize 
the corresponding model-companion and its simplicity theorems in \cite{CHK} to an arbitrary stable
nfcp group containing a non-commutative free subgroup (Theorem \ref{thm:generic-color}). For the counterexample, we apply this existence and simplicity of the model companion for naming a paradoxical decomposition to the theory of  a nonabelian free groups $F$ (which is stable and nfcp by \cite{SelaStability, SklinosNFCP}). Finite quotients of
a free group are then used to construct a global type $p$ stabilized by a subgroup of
bounded index as follows. Enumerate the quotient maps \(q_n:F\to R_n:=F/N_n\) associated with all
finite-index normal subgroups \(N_n\trianglelefteq F\), elementarily embed
the monster $\mathbb{U} \succ F$ of the group reduct into the ultrapower \(P^*=F^I/\mathcal D\), and put
\(Q^*=(F*\langle c\rangle)^I/\mathcal D\).  Writing
\(q_n^*:P^*\to R_n\) for the induced maps, let $
 H=\U\cap\bigcap_{n<\omega}\ker(q_n^*)$. 
Thus \(H\) consists of the monster elements invisible in every standard
finite quotient,  and we have an embedding of the quotient $\mathbb{U}/H$ 
into the countable product \(\prod_nR_n\), so \(H\) has bounded index.  For
\(h=[h_i]_{\mathcal D}\in H\), the coordinatewise  automorphism
defined by
\(\nu_{h_i}|_F=\mathrm{id}_F\) and \(\nu_{h_i}(c)=h_ic\)
fixes \(P^*\) pointwise, sends \([c]_{\mathcal D}\) to \(h[c]_{\mathcal D}\), and preserves every extended
finite-quotient map.  Residual finiteness makes the induced \(F\)-action on
the quotient by the group generated by these automorphisms free, so the good
coloring can be chosen invariant under them.  Relative homogeneity then gives
\([c]_{\mathcal D}\equiv_{\U}h[c]_{\mathcal D}\).  Thus \(H\leq\Stab(p)\) for
\(p=\tp([c]_{\mathcal D}/\U)\), and \(p\) has bounded orbit (see \Cref{sec:free-group} for the details).

The proof is organized as follows.  In \Cref{sec:good-colorings} we isolate
the finite extension test for good colorings of a free \(F_{12}\)-set from  \cite{CHK}.  In
\Cref{sec:stable-base} we axiomatize the model companion, prove an 
independent amalgamation lemma, and derive simplicity   
from it.  In \Cref{sec:free-group} we apply the result to the theory of
nonabelian free groups and use finite quotients to construct the bounded
orbit.

\subsection*{AI disclosure} 
ChatGPT 5.6 was used to establish 
 parts of the general model-companion construction and to find the bounded-orbit type 
construction.  The author subsequently
simplified and streamlined the arguments and presentation and takes full
responsibility for the contents.

\subsection*{Acknowledgements}
Chernikov was partially supported by the NSF
Research Grants DMS-2246598, DMS-2554164 and by the Alexander von Humboldt Foundation.

\section{Good colorings and a finite extension test}
\label{sec:good-colorings}
We recall some material from \cite[Section~3]{CHK}. 
Let \(\Gm=F_{12}\) be the free group of rank \(12\), with free generators $
  g_{ij}$ for $i\in[4],\ j\in[3]$, 
and let \(\Gm\) act freely on a set \(X\).  

\begin{definition}\cite[Definition 3.1]{CHK}.
	For \(A\subseteq X\), a map
\(c:A\to[4]\) is a \emph{good partial coloring} if, for every \(x\in A\) and
every \(i\in[4]\),
\[
  \bigl(g_{i1}x,g_{i2}x,g_{i3}x\in A\bigr)
  \quad\Longrightarrow\quad
  c(g_{ij}x)=i\text{ for some }j\in[3].                 \tag{2.1}
\]
\end{definition}

Fix \(n\geq1\), and put $
  \alpha(n)=2^{n+1}-1$. 
For an integer \(R\geq0\), let \(W_R\) be the reduced words of length at most \(R\) in the generators
and their inverses.  For an \(n\)-tuple \(\bar x=(x_1,\ldots,x_n)\), put
\[
  P_R(\bar x)=\{w x_r:w\in W_R,\ r\in[n]\}.
\]
Here \(1\in W_R\) denotes the empty word, and for \(A\subseteq X\) we write
\[
  B_R(A)=\{wa:w\in W_R,\ a\in A\}.
\]
Equivalently, \(B_R(A)\) is the radius-\(R\) ball about \(A\) in the graph
whose edges join \(x\) to \(g_{ij}^{\pm1}x\), and $
  P_R(\bar x)=B_R(\{x_1,\ldots,x_n\})$. 

\begin{fact}{\cite[Lemma 3.5]{CHK}}
\label{fact:finite-extension}
Suppose \(X_0\subseteq X\) has size \(m\geq1\), and let
\(c_0:X_0\to[4]\) be any map.  If there is a good partial coloring $
 c_B:B_{\alpha(m)}(X_0)\to [4]$ 
with \(c_B|_{X_0}=c_0\), then there is a total good coloring $
 c:X\to [4]$ 
with \(c|_{X_0}=c_0\).
\end{fact}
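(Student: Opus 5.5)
The plan is to reduce to a single orbit, where the geometry is a tree, and then extend outward greedily. Since \(\Gm\) acts freely, the constraints (2.1) never relate points in different orbits, so I will work orbit by orbit. Orbits missing \(X_0\) get an arbitrary good coloring, which exists by the paradoxical construction of \cite{CHK} or by the greedy step below run from a single point. On an orbit meeting \(X_0\), I identify the orbit with the Cayley graph of \(F_{12}\), which is a \(24\)-regular tree \(\mathcal T\). The constraint at a vertex \(x\) for colour \(i\) only involves the three neighbours \(g_{i1}x,g_{i2}x,g_{i3}x\).

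\emph{Step 1 (greedy extension from a subtree).} Let \(S\subseteq\mathcal T\) be a finite connected subtree and let \(c_S\) be a good partial coloring of \(S\). I claim \(c_S\) extends to a total good coloring. Colour \(\mathcal T\setminus S\) in order of distance from \(S\). Each new vertex \(y\) has a unique neighbour \(x\) closer to \(S\), and \(y=g_{ij}^{\pm1}x\) for exactly one label. If \(y=g_{ij}x\), the constraint at \(x\) for colour \(i\) involves \(y\) together with \(x\)'s other \(g_{i\cdot}\)-neighbours; all of these except possibly the parent of \(x\) are uncoloured children of \(x\).

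So for each \(x\) and each \(i\) whose triple is not yet complete, I give one uncoloured child \(g_{ij}x\) the colour \(i\). Distinct \(i\) use distinct children, so there are no conflicts. A child affects only the constraint at its parent and constraints at its own children, which are handled later. Tree-ness is exactly what makes this consistent. If \(x\in S\) has its \(i\)-triple entirely inside \(S\), goodness of \(c_S\) already handles it.

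\emph{Step 2 (building a good subtree containing \(X_0\)).} Now I must produce such an \(S\supseteq X_0\cap\text{orbit}\) with a good colouring extending \(c_0\), using only \(c_B\) on \(B_{\alpha(m)}(X_0)\). The difficulty is that the convex hull of \(X_0\) may be far larger than this ball. I will cluster: define a scale sequence \(r_0=1\), \(r_{k+1}=2r_k+1\), so \(r_k=2^{k+1}-1\). Consider the relation "distance \(\le r_k\)" between points of \(X_0\). Each time the number of clusters fails to stabilise, a merge occurs, and there are at most \(m\) points. So some \(k\le m\) gives clusters whose internal hulls lie within \(B_{r_k}\) of the cluster while distinct clusters are at distance \(>2r_k+1\). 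This pigeonhole is what produces \(\alpha(m)=2^{m+1}-1\).

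For each cluster \(C\), take \(S_C\) to be the convex hull of \(B_{r_k}(C)\) intersected with \(B_{\alpha(m)}(X_0)\), coloured by \(c_B\), and shrink slightly so that every constraint at a point of \(S_C\) with a triple inside \(S_C\) is witnessed by \(c_B\). These pieces are pairwise far apart. I then join them by the geodesic paths of \(\mathcal T\), giving a finite subtree \(S\).

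\emph{Step 3 (colouring the connecting paths; main obstacle).} Interior path vertices must be coloured so that every constraint whose triple lies in \(S\) holds. A path vertex \(v\) has at most two neighbours in \(S\), so an \(i\)-triple at \(v\) is never complete in \(S\), and constraints at \(v\) are vacuous. The same holds at a junction vertex \(x\) of some \(S_C\) where a path leaves: its triple can only become complete through at most one path vertex \(y\). Colouring that \(y\) with the needed colour \(i\) fixes it, and this choice is forced only by the constraint at \(x\).

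The real bookkeeping issue is making sure that \(y\) is not simultaneously forced by two requirements, and that paths do not re-enter the balls. This uses that clusters are separated by more than \(2r_k+1\), so paths have length at least \(2\) outside the pieces. I expect the main obstacle to be this clustering and separation estimate, i.e.\ checking that the restricted pieces \(S_C\) are genuinely subtrees on which \(c_B\) is good and that the gaps are long enough. Once that holds, Step 1 finishes the proof.
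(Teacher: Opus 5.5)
Your proof has a genuine gap in Step 3: the assertion that a vertex of $S$ outside the pieces has at most two neighbours in $S$ is false. Any connected set containing the pieces contains their convex hull. That hull generally has branch vertices outside every piece, and at such a vertex $v$ a whole triple $g_{i1}v,g_{i2}v,g_{i3}v$ can lie in $S$. Your separation only gives $d(S_C,S_{C'})\ge 2$, so such a $v$ can sit right next to several pieces.

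For instance, take $X_0=\{g_{11}^2v,\ g_{12}^2v,\ g_{13}^8v\}$. The pairwise distances are $4,10,10$, so $k=0$ is an admissible scale and the pieces are the balls $B_1(x)$, $x\in X_0$. Let $c_B$ be the restriction of a total good colouring with $c_B(g_{11}v)=c_B(g_{12}v)=2$; then necessarily $c_B(g_{13}v)=1$. The $1$-triple at $v$ lies in $S$. However, $v$ is only a backward neighbour of the junctions $g_{11}v,g_{12}v$, and $g_{13}v$ is adjacent to no piece. So your Step 3 treats both as free path vertices and may give $g_{13}v$ a colour $\neq1$. Step 1 cannot repair this, since it assumes $c_S$ is already good.

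Your estimate also allows a connecting path with a single interior vertex adjacent to two pieces. So ``paths of length at least $2$'' does not keep two forcing requirements apart.

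The missing idea is to use goodness of $c_B$ at vertices \emph{outside} the pieces. Your argument uses $c_B$ only on the pieces, and the large radius only to produce separation. In the example above, goodness of $c_B$ at $v\in B_{r_k+1}(X_0)$ is what makes the forced choices mutually consistent, including the double forcing of a single vertex $y$ (both forced values equal $c_B(y)$). This requires $2r_k+1\le\alpha(m)$, i.e.\ $k\le m-1$. At the scale $k=m$, which your ``some $k\le m$'' permits, the scheme fails outright. Take $X_0=\{g_{11}^{16}v,g_{12}^{16}v,g_{13}^{16}v\}$ with $m=3$ and $\alpha(3)=15$. Then $k=3$ is admissible, and the pieces $B_{15}(x)$ contain $g_{11}v,g_{12}v,g_{13}v$. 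Since $v\notin B_{15}(X_0)$, $c_B$ may colour all three with $2$. No good colouring of $S$ then agrees with $c_B$ on the pieces.

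One way to close the gap is to work directly with the hull $S$ of $X_0$ in each orbit.
\begin{enumerate}
\item A constraint is live in $S$ only at a vertex with at least three $S$-neighbours, and there are at most $m-2$ such vertices.
\item Two live constraints compete for an uncoloured vertex only if they sit at vertices at distance $2$; group the live constraints accordingly.
\item A group with a vertex in $B_1(X_0)$ has all its vertices and triples inside $B_{2m}(X_0)\subseteq B_{\alpha(m)}(X_0)$, so colour its candidates by $c_B$.
\item In every other group, each constraint has three uncoloured candidates. The constraint--candidate incidence graph is a forest, since constraints at one vertex have disjoint candidates. Hall's theorem then gives distinct representatives.
\item Colour the rest of $S$ arbitrarily (respecting $c_0$) and finish with your Step 1.
\end{enumerate}
This even works with a linear radius, so the doubling-scale clustering is not where the difficulty lies.
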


\begin{fact}[{\cite[Lemma~3.2]{CHK}}]
\label{fact:connected-extension}
Every good partial coloring of a nonempty connected subset of a free
\(\Gm\)-set extends to a total good coloring of the unique \(\Gm\)-orbit
containing it.
In particular, one may prescribe an arbitrary color at one chosen point of
an otherwise uncolored orbit.
\end{fact}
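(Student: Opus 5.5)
The statement to prove is \Cref{fact:connected-extension}: a good partial coloring of a nonempty connected subset \(A\) of a free \(\Gm\)-set extends to a total good coloring of the \(\Gm\)-orbit of \(A\). Since \(\Gm\) is free on the \(g_{ij}\) and acts freely, that orbit's Schreier graph (edges \(x\)--\(g_{ij}^{\pm1}x\)) is a \(24\)-regular tree. A connected subset \(A\) is therefore a subtree. The plan is to extend the coloring one vertex at a time, always to a vertex adjacent to the current colored set. Each step only needs a local check, and we exhaust the orbit by a breadth-first enumeration.

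\textbf{Key step.} Let \(A'\supseteq A\) be a connected colored set on which the coloring \(c\) is good. Let \(y\notin A'\) be adjacent to \(A'\). Because the graph is a tree and \(A'\) is connected, \(y\) has exactly one neighbour in \(A'\), say \(z=g^{\varepsilon}y\) with \(g=g_{i_0j_0}\) and \(\varepsilon=\pm1\).

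Now list the instances of (2.1) that become newly fully colored once \(y\) is added. A triple \(\{g_{i1}x,g_{i2}x,g_{i3}x\}\) contains \(y\) in one of two ways.
\begin{enumerate}[label=(\alph*)]
\item \(x=y\). Then all three points would have to lie in \(A'\), which is impossible unless they equal \(z\). Since the action is free, at most one \(g_{ij}y\) equals \(z\). So this triple is not fully colored.
\item \(y=g_{ij}x\) for some \(j\). Then \(x\) is a neighbour of \(y\), so \(x=z\). This forces \(\varepsilon=-1\) and \((i,j)=(i_0,j_0)\). The other two points \(g_{i_0j'}z\) are neighbours of \(z\), and they may lie in \(A'\).
\end{enumerate}
Hence at most one constraint is new, namely the one attached to \(x=z\) and \(i=i_0\). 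We satisfy it by setting \(c(y)=i_0\). This choice makes the constraint hold regardless of the colors of the other two points.

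If \(y\) is not of the form \(g_{i_0j_0}z\), no new constraint is fully colored, and any color works. In particular, color \(i_0\) (or an arbitrary color) is always admissible. The new set \(A'\cup\{y\}\) is still connected.

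\textbf{Exhausting the orbit.} Enumerate the orbit by distance from \(A\), transfinitely if the orbit is uncountable. Alternatively, use Zorn's lemma on good partial colorings of connected supersets of \(A\) extending \(c\), ordered by extension. Unions of chains remain good, because every instance of (2.1) involves only finitely many points. A maximal element must be the whole orbit, since otherwise the key step extends it. The second claim of the fact is the case \(A=\{x_0\}\): a single point carries no fully colored triple, because the three points \(g_{i1}x_0,g_{i2}x_0,g_{i3}x_0\) are distinct from \(x_0\).

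\textbf{Main obstacle.} The crux is the uniqueness argument in the key step. It requires the orbit to be a tree, which uses both that \(\Gm\) is free on the \(g_{ij}\) and that the action is free. These together ensure that \(y\) has a single colored neighbour and that at most one constraint involving \(y\) becomes fully determined. I will verify carefully that no cycle could create two competing constraints at \(y\).
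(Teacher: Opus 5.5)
Your proof is correct and is essentially the standard argument behind \cite[Lemma~3.2]{CHK}, which the paper cites without proof. The orbit is a $24$-regular tree, so a vertex $y$ adjacent to a connected colored set has a unique colored neighbour $z$; the only instance of (2.1) that can become fully colored is the one at $(z,i_0)$ when $y=g_{i_0j_0}z$, and coloring $y$ by $i_0$ settles it, with Zorn's lemma (or an enumeration of the orbit, which is countable since it is in bijection with $\Gamma$) finishing the extension.
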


\begin{lemma}\label{lem:finite-test}
Let \(\bar x=(x_1,\ldots,x_n)\) be a tuple in a free \(\Gm\)-set \(X\),
and let \(\eta:[n]\to[4]\).  The prescription \(c(x_r)=\eta(r)\) extends to
a total good coloring of \(X\) if and only if $
  x_r=x_s \ \Rightarrow \ \eta(r)=\eta(s)$ 
and the resulting coloring of the distinct roots extends to a good partial
coloring of $
  P_{\alpha(n)}(\bar x)$. 
\end{lemma}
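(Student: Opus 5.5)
The plan is to reduce the lemma to \Cref{fact:finite-extension}, applied to the set of distinct roots. Let \(X_0=\{x_1,\ldots,x_n\}\subseteq X\) be this set, with \(m=|X_0|\). Then \(1\le m\le n\).

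\emph{Necessity.} Suppose \(c:X\to[4]\) is a total good coloring with \(c(x_r)=\eta(r)\) for all \(r\). If \(x_r=x_s\), then \(\eta(r)=c(x_r)=c(x_s)=\eta(s)\), so \(\eta\) is constant on coincident coordinates. The induced map \(c_0:X_0\to[4]\) is therefore well defined. Next, the restriction of a good coloring to any subset \(A\) is again a good partial coloring. Indeed, condition (2.1) for \(A\) only concerns points \(x\in A\) with \(g_{i1}x,g_{i2}x,g_{i3}x\in A\), and for these it holds because it holds for \(c\) on \(X\). Taking \(A=P_{\alpha(n)}(\bar x)\), which contains \(X_0\) since \(1\in W_{\alpha(n)}\), gives the required good partial coloring extending \(c_0\).

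\emph{Sufficiency.} Suppose \(\eta\) respects coincidences and \(c_0\) extends to a good partial coloring \(c_P\) of \(P_{\alpha(n)}(\bar x)=B_{\alpha(n)}(X_0)\). Since \(m\le n\) and \(\alpha\) is increasing, \(\alpha(m)\le\alpha(n)\), hence \(B_{\alpha(m)}(X_0)\subseteq B_{\alpha(n)}(X_0)\). Let \(c_B\) be the restriction of \(c_P\) to \(B_{\alpha(m)}(X_0)\). By the remark on restrictions above, \(c_B\) is a good partial coloring, and it still extends \(c_0\). \Cref{fact:finite-extension} then gives a total good coloring \(c\) of \(X\) with \(c|_{X_0}=c_0\), so \(c(x_r)=\eta(r)\) for every \(r\).

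I do not expect a real obstacle. The only points needing care are that good partial colorings are preserved under restriction to subsets, which is immediate from the form of (2.1), and the monotonicity \(\alpha(m)\le\alpha(n)\), which is needed because \Cref{fact:finite-extension} is stated in terms of the number of distinct points rather than the length of the tuple.
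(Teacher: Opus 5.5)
Your proposal is correct and follows the paper's proof essentially verbatim: necessity by restricting a total good coloring to \(P_{\alpha(n)}(\bar x)\), and sufficiency by restricting the given good partial coloring to \(B_{\alpha(m)}(X_0)\) with \(m=|X_0|\le n\) and then applying \Cref{fact:finite-extension}. Your explicit justification that restrictions of good (partial) colorings remain good, and your note that \(\alpha(m)\le\alpha(n)\) because \(\alpha\) is increasing, match the points the paper uses.
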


\begin{proof}
The forward implication follows by restricting a total good coloring to
\(P_{\alpha(n)}(\bar x)\): whenever all three relevant successors remain in
the restricted domain, the witness supplied by the total coloring remains
there as well.

Conversely, let \(X_0=\{x_1,\ldots,x_n\}\), and put \(m=|X_0|\).  The
compatibility condition makes the prescribed root coloring a well-defined
map \(c_0:X_0\to[4]\).  Since \(m\leq n\), one has
\(\alpha(m)\leq\alpha(n)\).  Restrict the assumed good partial coloring from
\(P_{\alpha(n)}(\bar x)=B_{\alpha(n)}(X_0)\) to
\(B_{\alpha(m)}(X_0)\); the restriction is still good for the same reason as
above.  The conclusion now follows from \Cref{fact:finite-extension}.
\end{proof}

\begin{lemma}\label{lem:gluing}
Let \((A_t)_{t\in I}\) be \(\Gm\)-invariant subsets of a free
\(\Gm\)-set \(X\).  Suppose  each \(A_t\) has a good coloring and that
these colorings agree on pairwise intersections.  Then they combine to a good
coloring of \(\bigcup_{t\in I}A_t\), and this coloring extends to a total good
coloring of \(X\).  Moreover, on each orbit disjoint from
\(\bigcup_{t\in I}A_t\), one may prescribe the color of one chosen point.
\end{lemma}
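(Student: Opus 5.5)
Let $A=\bigcup_{t\in I}A_t$. Since the given colorings $c_t:A_t\to[4]$ agree on pairwise intersections, their union $c_A=\bigcup_t c_t$ is a well-defined function on $A$. I would verify goodness of $c_A$ directly from (2.1). Take $x\in A$ and $i\in[4]$ with $g_{i1}x,g_{i2}x,g_{i3}x\in A$. Pick $t$ with $x\in A_t$. Because $A_t$ is $\Gm$-invariant, the three points $g_{ij}x$ also lie in $A_t$. Goodness of $c_t$ then gives some $j$ with $c_t(g_{ij}x)=i$, and $c_A$ extends $c_t$, so $c_A(g_{ij}x)=i$. Invariance is what makes the three successors land in a single piece; without it the union could fail to be good.

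Next, I extend $c_A$ to all of $X$ orbit by orbit. The set $A$ is $\Gm$-invariant, so it is a union of orbits, and $X\setminus A$ is also a union of orbits. Every orbit $O\subseteq X\setminus A$ is a nonempty connected subset of $X$ in the Cayley-type graph. I choose one point $x_O\in O$ and an arbitrary color $\eta_O\in[4]$ (or the color we wish to prescribe). The single-point coloring $\{x_O\mapsto\eta_O\}$ is trivially a good partial coloring: condition (2.1) requires all three $g_{ij}x_O$ to lie in the domain $\{x_O\}$, which freeness rules out, so the condition is vacuous. By \Cref{fact:connected-extension}, this extends to a total good coloring $c_O$ of $O$. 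Choosing one such extension for each orbit uses the axiom of choice.

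Finally, let $c=c_A\cup\bigcup_O c_O$. These domains partition $X$, so $c$ is a total function. To check (2.1) at a point $x$, note that $x$ and all its translates $g_{ij}x$ lie in the same orbit. That orbit is either contained in $A$ or is one of the $O$, so (2.1) at $x$ reduces to goodness of $c_A$ or of $c_O$, both of which hold. The "moreover" clause is built into the construction through the choice of $\eta_O$.

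I expect no step to be a real obstacle. The only points needing care are the two invariance observations: a good coloring condition at $x$ only involves points in the orbit of $x$, and the sets $A_t$ are closed under $\Gm$. Both are immediate.
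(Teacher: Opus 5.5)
Your proof is correct and follows essentially the same route as the paper: compatibility plus $\Gm$-invariance makes the combined coloring good, and each remaining orbit is handled by a one-point coloring (good by freeness) extended via \Cref{fact:connected-extension}. Your direct check of (2.1), choosing a single $A_t$ containing $x$, is just a more explicit version of the paper's remark that an orbit meeting two of the $A_t$ lies in their intersection.
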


\begin{proof}
The colorings are compatible by assumption.  If a \(\Gm\)-orbit meets two
of the sets \(A_t\), invariance puts the whole orbit in their intersection.
It follows at once that the combined coloring is good.  On every remaining
orbit choose one point, with any prescribed color when one has been specified.
The one-point coloring is good because the action is free, and
\Cref{fact:connected-extension} extends it to a good coloring of the whole
orbit.  The union of the resulting orbitwise colorings is the required total
coloring.
\end{proof}

\section{Naming a generic paradoxical decomposition of a stable nfcp group}
\label{sec:stable-base}
%Recall that a formula \(\theta(x;y)\) has the \emph{finite cover property} if
%there are arbitrarily large finite inconsistent sets of instances of
%\(\theta\), every proper subset of which is consistent.  A theory has
%\emph{nfcp} if no formula
%has the finite cover property.  Equivalently, for every \(\theta(x;y)\) there
%is \(k<\omega\) such that an arbitrary set of instances of \(\theta\) is
%consistent whenever each of its subsets of size at most \(k\) is consistent.

Throughout this section, suppose \(T_0\) is a complete stable
nfcp \(L_0\)-theory with quantifier elimination,  \(G\) is an 
\(\emptyset\)-definable group, and the elements
\[
  a_{ij}\in G(\operatorname{dcl}_0(\emptyset))
  \qquad(i\in[4],\ j\in[3])
\]
freely generate the free group \(\Gamma\cong F_{12}\).  All word maps built from the
\(a_{ij}\) are parameter-free definable. Since \(T_0\) is complete,
every nontrivial reduced word in the \(a_{ij}\) is unequal to \(1\) in every
model of \(T_0\).  Consequently, left multiplication induces a free
\(\Gamma\)-action on \(G(M)\) for every \(M\models T_0\).

Fix a parameter-free formula
\(\delta_G(u)\) defining the domain of \(G\). For a
set \(A\) in a model, \(G(A)\) consists of the group tuples all of whose
coordinates lie in \(A\); \(\acl_0\) denotes algebraic closure, and \(A\ind^0_C B\) means that
\(\tp_{L_0}(A/BC)\) does not fork over \(C\) in $T_{0}$.  Every \(L_0\)-elementary map fixes the \(\emptyset\)-definable elements
\(a_{ij}\), and therefore commutes with every \(\Gamma\)-word map.  Consequently
\(G(A)\) is \(\Gamma\)-invariant whenever \(A\) is definably closed, and
transporting a good coloring of \(G(A)\) by an \(L_0\)-elementary map again
gives a good coloring.

\begin{definition}
	Let $
  L_{\mathrm{gc}}=L_0\cup\{C_1,C_2,C_3,C_4\}$, where \(C_i(u)\) are new relation symbols of the same sort as $G$. 
Let an $L_{\mathrm{gc}}$-theory 
\(T_0^{\mathrm{col}}\) say that its \(L_0\)-reduct is a model of \(T_0\),
that
\begin{gather*}
	\forall u\left(\delta_G(u)\longleftrightarrow
                 \bigvee_{i=1}^{4}C_i(u)\right),
 \qquad
 \forall u\bigwedge_{1\leq i<k\leq4}
                 \neg\bigl(C_i(u)\wedge C_k(u)\bigr), \textrm{ and}\\
                 \forall u\in G\ \bigvee_{j=1}^{3}C_i(a_{ij}u)
  \qquad(i\in[4]).                                           \tag{3.1}
\end{gather*}

\end{definition}
\noindent  Thus the restrictions of the \(C_i\) to \(G\) give a total good coloring for
the free left action of
\(\Gamma\) on $G$, in the sense of \Cref{sec:good-colorings}, with \(g_{ij}\) acting as left
multiplication by \(a_{ij}\).
The theory has models: for any \(M\models T_0\), apply
\Cref{lem:gluing} to the empty family of colored subsets of the free
\(\Gamma\)-set \(G(M)\).

\begin{lemma}\label{lem:measure-obstruction}
No model of \(T_0^{\mathrm{col}}\) admits a left-invariant Keisler measure on
the colored group \(G\).
\end{lemma}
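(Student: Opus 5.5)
Suppose towards a contradiction that \(M\models T_0^{\mathrm{col}}\) and \(\mu\) is a left-invariant Keisler measure on \(G(M)\), i.e.\ a finitely additive probability measure on the definable subsets of \(G\) in \(L_{\mathrm{gc}}\) with parameters from \(M\), satisfying \(\mu(gD)=\mu(D)\) for all \(g\in G(M)\) and definable \(D\). The plan is to compute \(\mu(G)\) in two ways using only the three axiom groups in (3.1). First, the sets \(C_i\cap G\) (\(i\in[4]\)) are \(\emptyset\)-definable. The first two axioms say they partition \(G\), so finite additivity gives
\[
  \sum_{i=1}^4\mu(C_i)=\mu(G)=1 .
\]

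Second, fix \(i\in[4]\). Rewrite the covering axiom \(\forall u\in G\ \bigvee_{j}C_i(a_{ij}u)\) as the set identity
\[
  G=\bigcup_{j=1}^3 a_{ij}^{-1}\bigl(C_i\cap G\bigr).
\]
Indeed, \(u\in a_{ij}^{-1}C_i\) if and only if \(a_{ij}u\in C_i\). Each translate \(a_{ij}^{-1}(C_i\cap G)\) is definable, since \(a_{ij}\in\operatorname{dcl}_0(\emptyset)\) and the group operation is definable. Subadditivity, which follows from finite additivity and monotonicity, together with left invariance then gives
\[
  1=\mu(G)\le\sum_{j=1}^3\mu\bigl(a_{ij}^{-1}C_i\bigr)=3\mu(C_i),
\]
so \(\mu(C_i)\ge 1/3\) for each \(i\).

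Summing over \(i\) yields \(1=\sum_i\mu(C_i)\ge 4/3\), a contradiction. The same argument works in any model, and also in the monster model, so no model admits such a measure. The argument only needs \(\mu\) to be defined on the \(\emptyset\)-definable sets and their translates by \(a_{ij}^{-1}\), which are parameter-free definable. Hence it even rules out measures that are invariant only under the elements \(a_{ij}\).

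I expect no real obstacle here. The only points needing care are the direction of translation, namely checking that \(\{u: a_{ij}u\in C_i\}=a_{ij}^{-1}C_i\) so that left invariance applies, and noting that the \(C_i\) are subsets of \(G\) by the first axiom, so all the sets involved lie in the domain of \(\mu\).
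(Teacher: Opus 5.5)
Your proof is correct and is essentially the paper's own argument. The covering axiom gives $G=\bigcup_{j=1}^{3}a_{ij}^{-1}C_i$, so left invariance and subadditivity force $\mu(C_i)\geq 1/3$ for each $i$, which contradicts $\sum_{i=1}^{4}\mu(C_i)=1$. Your side remark is also valid: only invariance under the named elements $a_{ij}$ is used.
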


\begin{proof}
For fixed \(i\), the three sets \(a_{ij}^{-1}C_i\), \(j\in[3]\), cover
\(G\).  Left invariance and finite subadditivity would therefore give $
  1\leq\sum_{j=1}^{3}\mu(a_{ij}^{-1}C_i)=3\mu(C_i)$, 
so \(\mu(C_i)\geq1/3\) for every \(i\in[4]\).  Since the four colors
partition \(G\), finite additivity would give $
  1=\sum_{i=1}^{4}\mu(C_i)\geq\frac43$, 
a contradiction.
\end{proof}

In the rest of the section we prove the following theorem (providing a partial justification for \cite[Remark 3.16]{CHK}):
\begin{theorem}
\label{thm:generic-color}
The theory
\(T_0^{\mathrm{col}}\) has a model companion 
\(T_0^{\mathrm{gc}}\).  For every completion of \(T_0^{\mathrm{gc}}\):
\begin{enumerate}[label=(\roman*)]
\item every color-preserving \(L_0\)-isomorphism between small
  \(\acl_0\)-closed sets is partial elementary;
\item algebraic closure is the same as algebraic closure in \(T_0\);
\item the theory is simple, and its nonforking relation coincides with non-forking in  $T_0$;
\item $G$ is not definably amenable (by Lemma \ref{lem:measure-obstruction}).
\end{enumerate}
\end{theorem}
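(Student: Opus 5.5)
We follow the strategy of \cite{CHK} for generic predicates on stable nfcp theories, with \Cref{lem:finite-test} supplying the first-order content. \textbf{Axiomatization.} Let \(T_0^{\mathrm{gc}}\) be \(T_0^{\mathrm{col}}\) together with the following scheme. For every \(L_0\)-formula \(\varphi(\bar x;\bar y)\) with \(\bar x=(x_1,\ldots,x_n)\) a tuple of \(G\)-variables, and every \(\eta:[n]\to[4]\), we add an axiom asserting: for all \(\bar b\), if there is a realization \(\bar x\) of \(\varphi(\bar x;\bar b)\) that is ``generic enough'', namely \(x_r\notin\acl_0(\bar b)\) for all \(r\) and the \(x_r\) lie in \(\Gamma\)-orbits pairwise distinct from each other and from those of \(G(\acl_0(\bar b))\) as far as \(P_{\alpha(n)}\) sees, then some realization has \(\bigwedge_r C_{\eta(r)}(x_r)\). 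To make this first order, we use nfcp in two places. First, the condition ``\(\varphi(\bar x;\bar b)\) has a realization outside every \(\bar b\)-algebraic set and avoiding the finitely many word-translates \(w x_s\), \(wb\) with \(w\in W_{2\alpha(n)}\)'' is expressible uniformly, because nfcp makes ``\(\varphi(\bar x;\bar b)\) has infinitely many solutions'' (and the analogous statement for non-algebraicity in each coordinate) definable in \(\bar b\). Second, the target colors are constrained only by good-coloring consistency on \(P_{\alpha(n)}(\bar x)\), which is a finite Boolean condition given the \(L_0\)-equalities among \(w x_r\) and \(w'b\). The cleanest formulation is: for each finite ``pattern'' (the equality type of \(P_{\alpha(n)}(\bar x\bar b)\) together with colors of the \(b\)-part), if some realization has that pattern and the pattern admits a good partial coloring extending the prescribed colors, then such a coloring is realized.

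\textbf{Model companion.} We show that (a) every model of \(T_0^{\mathrm{col}}\) embeds in a model of \(T_0^{\mathrm{gc}}\), and (b) every model \(M\models T_0^{\mathrm{gc}}\) is existentially closed. For (a), we take an \(L_0\)-elementary extension \(N\succ M\) realizing the required generic tuples. We color \(G(N)\) using \Cref{lem:gluing}: the \(\Gamma\)-invariant set \(G(M)\) keeps its coloring, and each new orbit can receive arbitrary colors through \Cref{fact:finite-extension} and \Cref{lem:finite-test}. We iterate \(\omega\) times and take the union. For (b), let \(M\subseteq N\) with \(N\models T_0^{\mathrm{col}}\), and consider an existential formula over \(M\). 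By QE in \(T_0\) it reduces to \(\varphi(\bar x;\bar b)\) plus color literals. The colors on \(P_{\alpha(n)}(\bar x)\) in \(N\) form a good partial coloring, so the corresponding pattern is consistent, and the axiom produces a witness in \(M\).

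\textbf{(i), (ii), (iii).} For (i), we run a back-and-forth between \(\omega\)-saturated models. We extend a color-preserving \(L_0\)-isomorphism \(f:A\to B\) of \(\acl_0\)-closed sets to \(\acl_0(Aa)\). Over \(A\), \(G(\acl_0(Aa))\setminus G(A)\) is a union of new \(\Gamma\)-orbits, since \(G(A)\) is \(\Gamma\)-invariant. We choose \(a'\) realizing \(f(\tp_0(a/A))\) and use the axioms and saturation to match the colors on the finitely many points of each finite piece, arguing by compactness. This also yields that \(T_0^{\mathrm{gc}}\)-types are determined by \(L_0\)-types of \(\acl_0\)-closures with colors. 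Then (ii) follows: any non-\(L_0\)-algebraic element has infinitely many conjugates over \(A\), because we can realize its \(L_0\)-type independently and copy its colors, using \Cref{lem:gluing} on new orbits. For (iii), we prove the independence theorem for \(\ind^0\) over \(\acl_0\)-closed bases. Given \(a\ind^0_E b\) and \(c_1\equiv_E c_2\) with \(c_1\ind^0_E a\) and \(c_2\ind^0_E b\), stationarity in stable \(T_0\) yields an \(L_0\)-amalgam \(c\) over \(\acl_0(Ea),\acl_0(Eb)\). The key point is that independence forces \(G(\acl_0(Eac))\), \(G(\acl_0(Ebc))\) and \(G(\acl_0(Eab))\) to pairwise intersect in \(\Gamma\)-invariant closed sets over which the prescribed colorings agree. \Cref{lem:gluing} then colors everything consistently, and (i) transfers the colored types. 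Together with invariance, symmetry, local character and finite character inherited from \(T_0\), the Kim–Pillay criterion gives simplicity with forking equal to \(\ind^0\). Finally, (iv) is \Cref{lem:measure-obstruction}.

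The main obstacle is the amalgamation step. The union of the three \(\acl_0\)-closures need not be \(\Gamma\)-closed: products such as \(a_{ij}\)-translates of elements in different pieces can land in \(G(\acl_0(Eabc))\) outside the union. The coloring must therefore be extended to the \(\Gamma\)-closure rather than just the union. We would handle this by working with the \(\Gamma\)-invariant sets \(G(\cdot)\) of dcl-closed pieces, and by checking via stable independence that \(G(\acl_0(Eac))\cap G(\acl_0(Ebc))=G(\acl_0(Ec))\), so that \Cref{lem:gluing} applies to these invariant sets directly. The nfcp uniformity in the axiomatization is the secondary difficulty.
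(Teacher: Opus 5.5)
\textbf{Verdict: genuine gap in (iii).} The model-companion part and (i) are sound (up to a saturation detail); the trouble is in the simplicity argument.

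\textbf{What is fine.} Your axiom scheme, in its ``pattern'' form, is the paper's $\operatorname{Good}_\eta$/$\operatorname{Ext}_\varphi$ scheme. Witnesses that land in $M$ are absorbed into the parameters, which is what the paper's disjunction over $J$ does, and the joint non-algebraicity condition is exactly what \Cref{lem:definable-avoidance} makes definable. Your first version, demanding pairwise distinct orbits, would not suffice, since an existential formula can force new witnesses into a single $\Gamma$-orbit. Proving (i) by back-and-forth from the axioms is a legitimate alternative to the paper's route via model-completeness, existential closedness and \Cref{lem:independent-gluing}. You do need $|A|^+$-saturated models rather than $\omega$-saturated ones, because every $\acl_0$-closed set contains $\operatorname{dcl}_0(\emptyset)\supseteq\Gamma$ and so is infinite.

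\textbf{The gap in (iii).} You assert the independence theorem over arbitrary $\acl_0$-closed bases $E$ and obtain the $L_0$-amalgam from ``stationarity in stable $T_0$''. Stationarity holds over models or $\acl^{eq}$-closed sets, not over sets that are merely $\acl_0$-closed in the real sorts, and over such sets the independence theorem can fail outright. Take a stable theory with an $\emptyset$-definable equivalence relation $R$ with two infinite classes. Then $\acl(\emptyset)=\emptyset$, yet with $a,b$ in different classes, $c_1\neq a$ with $R(c_1,a)$, and $c_2\neq b$ with $R(c_2,b)$, we have $a\ind_\emptyset b$, $c_1\ind_\emptyset a$, $c_2\ind_\emptyset b$ and $c_1\equiv_\emptyset c_2$. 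No $c$ realizes both $\tp(c_1/a)$ and $\tp(c_2/b)$.

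Kim--Pillay only needs the independence theorem over models, so take $E=M\preccurlyeq\U$, as in \Cref{lem:independent-amalgamation}. Even there, agreement of the colorings on $G(\acl_0(Mc))$ is not forced by independence. Independence gives the intersections, but the colorings transported from $\acl_0(Mc_1)$ and from $\acl_0(Mc_2)$ agree only if you amalgamate enumerations of $D_1=\acl_0(Mc_1)$ and $\sigma(D_1)$. Here $\sigma\in\operatorname{Aut}_{L_{\mathrm{gc}}}(\U/M)$ sends $c_1$ to $c_2$; this is the paper's $f_B$ extending $t\circ s^{-1}$. An arbitrary $L_0$-amalgam of the element $c$ can match the algebraic points in a color-inconsistent way.

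\textbf{Missing extension axiom.} Your list of Kim--Pillay axioms omits extension, and it is not inherited from $T_0$. An $L_0$-nonforking extension of $\tp_{L_0}(a/C)$ to $B$ has the right $L_0$-type but need not realize $\tp_{L_{\mathrm{gc}}}(a/C)$. You must take a copy $D'$ of $\acl_0(Ca)$ over $\acl_0(C)$ with $D'\ind^0_{\acl_0(C)}\U$ and transport the colors. Then glue with the ambient coloring via \Cref{lem:independent-gluing}, pass to an existentially closed extension, and apply (i). This is the same copy-and-glue step you already use for (ii), and it is the argument in \Cref{prop:simplicity}. Transitivity, also absent from your list, is inherited from $T_0$.

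\textbf{The ``main obstacle'' is not one.} $\Gamma$ acts only by left multiplication by words in the $a_{ij}$. So $G(X)$ is $\Gamma$-invariant for every $\operatorname{dcl}_0$-closed $X$, any union of such sets is $\Gamma$-invariant, and \Cref{lem:gluing} already extends a compatible family to all of $G(N)$. The real content is the compatibility on pairwise intersections described above.
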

\noindent (Here an \(L_0\)-map \(f:A\to B\) is \emph{color-preserving} if, for every
\(b\in G(A)\) and every \(i\in[4]\), $
 C_i(b) \Leftrightarrow C_i(f(b))$, 
where \(f\) is applied coordinatewise to the tuple representing \(b\).)
\subsection{The model companion}

Fix \(n\geq1\), \(\eta:[n]\to[4]\), and group variables
\(\bar z=(z_1,\ldots,z_n)\).  Put \(R=\alpha(n)\) and
\(I_R=W_R\times[n]\).  For \(p=(w,r)\in I_R\), let
\(\tau_p(\bar z)=wz_r\), where the abstract generators in \(w\) act by the
corresponding \(a_{ij}\).  For every map \(\lambda:I_R\to[4]\) satisfying
\(\lambda(1,r)=\eta(r)\), let
\[
 \operatorname{Coh}_\lambda(\bar z):=
 \bigwedge_{\substack{p,q\in I_R\\\lambda(p)\ne\lambda(q)}}
       \tau_p(\bar z)\ne\tau_q(\bar z),
\]
\[
\operatorname{Safe}_\lambda(\bar z) :=  \bigwedge_{\substack{p\in I_R\\k\in[4]}}
 \left[
   \left(
     \bigwedge_{j=1}^{3}\bigvee_{q\in I_R}
       \tau_q(\bar z)=a_{kj}\tau_p(\bar z)
   \right)
   \longrightarrow
   \left(
     \bigvee_{j=1}^{3}
     \bigvee_{\substack{q\in I_R\\\lambda(q)=k}}
       \tau_q(\bar z)=a_{kj}\tau_p(\bar z)
   \right)
 \right].
\]
Define
\[
 \operatorname{Good}_\eta(\bar z):=
 \bigvee_\lambda
 \bigl(\operatorname{Coh}_\lambda(\bar z)
       \wedge\operatorname{Safe}_\lambda(\bar z)\bigr),
 \qquad
 \operatorname{Good}_\emptyset:=\top.
\]
By assumption the word maps are $\emptyset$-definable, thus, by quantifier elimination,  \(\operatorname{Good}_\eta\) is a
quantifier-free \(L_0\)-formula.

\begin{lemma}[Formula for finite extendability]\label{lem:formula-test}
For \(M\models T_0\) and \(\bar z\in G(M)^n\), one has
\(M\models\operatorname{Good}_\eta(\bar z)\) if and only if the prescription
\(c(z_r)=\eta(r)\) extends to a total good coloring of the free
\(\Gm\)-set \(G(M)\).
\end{lemma}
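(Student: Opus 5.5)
The plan is to reduce everything to \Cref{lem:finite-test}, which says the prescription extends to a total good coloring if and only if two things hold: the roots are compatibly colored, and the induced root coloring extends to a good partial coloring of \(P_{R}(\bar z)\) with \(R=\alpha(n)\). So it suffices to show that \(\operatorname{Good}_\eta(\bar z)\) holds exactly when both of these conditions are met. The bridge between the two is the evaluation map \(e:I_R\to P_R(\bar z)\), \(e(w,r)=\tau_{(w,r)}(\bar z)=wz_r\), which is surjective by the definition of \(P_R(\bar z)\).

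For the forward direction, suppose \(\lambda\) witnesses \(\operatorname{Coh}_\lambda\wedge\operatorname{Safe}_\lambda\).
\begin{enumerate}[label=(\alph*)]
\item \(\operatorname{Coh}_\lambda(\bar z)\) says that \(\tau_p(\bar z)=\tau_q(\bar z)\) implies \(\lambda(p)=\lambda(q)\). Hence \(\lambda\) factors through \(e\), giving a well-defined map \(c_B:P_R(\bar z)\to[4]\).
\item Since \(\lambda(1,r)=\eta(r)\), we get \(c_B(z_r)=\eta(r)\). Coherence applied to the indices \((1,r),(1,s)\) gives the root compatibility condition \(z_r=z_s\Rightarrow\eta(r)=\eta(s)\).
\item Take \(x=\tau_p(\bar z)\in P_R(\bar z)\) and \(k\in[4]\). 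The antecedent of \(\operatorname{Safe}_\lambda\) at \((p,k)\) says precisely that \(a_{k1}x,a_{k2}x,a_{k3}x\in P_R(\bar z)\). The consequent says some \(a_{kj}x\) equals \(\tau_q(\bar z)\) for a \(q\) with \(\lambda(q)=k\), that is, \(c_B(a_{kj}x)=k\). This is exactly condition (2.1), with \(g_{kj}\) acting as left multiplication by \(a_{kj}\).
\end{enumerate}
So \(c_B\) is a good partial coloring extending the root coloring, and \Cref{lem:finite-test} yields a total good coloring.

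For the converse, given a total good coloring \(c\) extending the prescription, set \(\lambda=c\circ e\). Then \(\lambda(1,r)=\eta(r)\), and \(\operatorname{Coh}_\lambda\) holds trivially, because equal points receive equal colors. For \(\operatorname{Safe}_\lambda\), suppose the antecedent holds at \((p,k)\). Goodness of \(c\) at \(x=\tau_p(\bar z)\) gives some \(j\) with \(c(a_{kj}x)=k\). Since \(a_{kj}x\in P_R(\bar z)\), write \(a_{kj}x=\tau_q(\bar z)\); then \(\lambda(q)=k\), as required. Thus the disjunct indexed by \(\lambda\) is true. Only the restriction of \(c\) to \(P_R(\bar z)\) was used, which matches the forward half of \Cref{lem:finite-test}.

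The remaining points are routine. The case \(n=0\) with \(\operatorname{Good}_\emptyset=\top\) is immediate: there are no prescriptions, and total good colorings exist by \Cref{lem:gluing}. The disjunction over \(\lambda\) is finite because \(I_R\) is finite, and the action is free as noted at the start of the section, so \Cref{lem:finite-test} applies. The only delicate point is the translation in step (c): the set-membership condition \(g_{kj}x\in A\) in (2.1) must be read, inside the formula, as ``equals some \(\tau_q\)''. This is where the surjectivity of \(e\) and the well-definedness from step (a) are used.
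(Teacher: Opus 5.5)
Your proposal is correct and follows the paper's proof. The paper argues the same way: the surjection $(w,r)\mapsto wz_r$ from $I_R$ onto $P_R(\bar z)$ lets $\operatorname{Coh}_\lambda$ induce a coloring of $P_R(\bar z)$ with the prescribed root colors, $\operatorname{Safe}_\lambda$ is exactly goodness of that coloring, and \Cref{lem:finite-test} finishes; your explicit converse via $\lambda=c\circ e$ and your separate treatment of $n=0$ only spell out details the paper leaves implicit.
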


\begin{proof}
The map $
 I_R \to P_R(\bar z)$ given by $(w,r)\mapsto wz_r$ 
is surjective.  The formula \(\operatorname{Coh}_\lambda\) says that
\(\lambda\) is constant on its fibers, so it induces a coloring of
\(P_R(\bar z)\) with the prescribed root colors.  The formula
\(\operatorname{Safe}_\lambda\) says that the induced coloring is a
good partial coloring.  The result is now \Cref{lem:finite-test}, applied to
the ambient free \(\Gm\)-set \(G(M)\).\end{proof}

\begin{lemma}
\label{lem:existential-normal-form}
Modulo \(T_0^{\mathrm{col}}\), every existential
\(L_{\mathrm{gc}}\)-formula \(\exists\bar x\,\theta(\bar x;\bar y)\), where $\theta(\bar x;\bar y)$ is quantifier-free, is equivalent to a finite disjunction of formulas $
  \exists\bar x\,\exists\bar z\,
  \varphi(\bar x,\bar z;\bar y)$, 
where 
\[
 \varphi(\bar x,\bar z;\bar y)
 =\psi_0(\bar x,\bar z;\bar y)
   \wedge\bigwedge_{r\in[n]}C_{\eta(r)}(z_r),               \tag{3.3}
\]
for some $0 \leq n <  \omega$, $\bar{z} = (z_1, \ldots,z_n)$ with $z_r$ of same sort as $G$,  $\eta: [n] \to [4]$ and \(\psi_0\) a quantifier-free \(L_0\)-formula implying 
\(\delta_G(z_r)\) for every \(r\in[n]\).  
\end{lemma}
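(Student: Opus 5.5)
The approach is to treat the colour predicates as the only non-$L_0$ part of $\theta$ and push them onto fresh variables. The colours can only be applied to group elements, and those elements are $L_0$-definable from $\bar x,\bar y$ through the terms of $L_0$. Introducing fresh variables $\bar z$ for them turns every colour atom into an atom $C_i(z_r)$ whose arguments are plain variables.

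First, put $\theta$ in disjunctive normal form. Since $\exists\bar x$ distributes over disjunction, it suffices to treat a conjunction of literals. Each literal is an $L_0$-literal, or has the form $C_i(t)$ or $\neg C_i(t)$ with $t$ a tuple of $L_0$-terms in $\bar x,\bar y$ of the sort of $G$.

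Second, make every colour literal positive. Modulo $T_0^{\mathrm{col}}$ the colours partition $G$ and vanish off $\delta_G$. So $\neg C_i(t)$ is equivalent to $\neg\delta_G(t)\vee\bigvee_{k\neq i}C_k(t)$, and $C_i(t)$ is equivalent to $\delta_G(t)\wedge C_i(t)$. Redistributing into disjunctive normal form, each disjunct becomes an $L_0$-quantifier-free part together with positive atoms $C_{i_1}(t_1),\dots,C_{i_n}(t_n)$ and the conditions $\delta_G(t_r)$.

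Third, introduce a new group variable $z_r$ for each $t_r$. Replace $C_{i_r}(t_r)$ by $z_r=t_r\wedge C_{i_r}(z_r)$, set $\eta(r)=i_r$, and quantify $\exists\bar z$. Equivalence is immediate, because $z_r$ is uniquely determined as the tuple $t_r$ and equality of tuples is coordinatewise, hence quantifier-free in $L_0$. Let $\psi_0$ be the conjunction of the remaining $L_0$-literals, the equations $z_r=t_r$, and $\delta_G(z_r)$ for each $r$. Then $\psi_0$ is quantifier-free in $L_0$ and implies $\delta_G(z_r)$ for every $r$, as required. A disjunct with no colour atoms is the case $n=0$.

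The only point needing care is that the colour atoms may be applied to compound terms rather than variables. If $L_0$ is relational, or if we use the quantifier elimination in $T_0$, the same trick still works: move any $L_0$-definable argument into a fresh variable with a quantifier-free defining condition. Since $G$ is $\emptyset$-definable and elements of $G$ are tuples in the home sort, no imaginaries issue arises. I expect no real obstacle here; this is a routine normal-form manipulation.
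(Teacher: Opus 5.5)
Your proposal is correct and follows the paper's proof essentially step for step. You put $\theta$ in disjunctive normal form, eliminate negative colour literals via $\neg C_i(t)\leftrightarrow\neg\delta_G(t)\vee\bigvee_{k\neq i}C_k(t)$, and redistribute. You then introduce a fresh group variable $z_r$ for each positive colour atom, adding $z_r=t_r$ and $\delta_G(z_r)$ to the quantifier-free $L_0$-part $\psi_0$, as the paper does.
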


\begin{proof}
Put \(\theta\) in disjunctive normal form.  In
\(T_0^{\mathrm{col}}\), every negative color literal can be replaced using $
 \neg C_i(t) \Leftrightarrow 
 \neg\delta_G(t)\ \vee\!\bigvee_{k\ne i}C_k(t)$. 
After distributing the resulting disjunctions, write $
 \theta(\bar x;\bar y) \Leftrightarrow
 \bigvee_{\ell<m}\theta_\ell(\bar x;\bar y)$, 
where each \(\theta_\ell\) is a conjunction of \(L_0\)-literals and positive
color literals.  Fix \(\ell<m\).  For every color literal
\(C_{\eta(r)}(t_r(\bar x,\bar y))\) occurring in \(\theta_\ell\), introduce a
new existentially quantified group variable \(z_r\), add
\(z_r=t_r(\bar x,\bar y)\) and \(\delta_G(z_r)\) to the \(L_0\)-part, and
replace the literal by \(C_{\eta(r)}(z_r)\).  The conjunction of the original
\(L_0\)-literals, these equalities, and the \(\delta_G\)-conditions is the
formula \(\psi_0\) in (3.3).  Thus
\(\exists\bar x\,\theta_\ell(\bar x;\bar y)\) is equivalent to
\(\exists\bar x\,\exists\bar z\,
  \varphi(\bar x,\bar z;\bar y)\).  Taking the disjunction over
\(\ell<m\) proves the lemma.
\end{proof}

Consequently, in axiomatizing existentially closed models it suffices to
consider formulas \(\varphi\) of the normalized form (3.3).  

\begin{lemma}\label{lem:definable-avoidance}
Let \(\bar z=(z_1,\ldots,z_n)\) be a tuple of group variables, 
\(S\subseteq[n]\) and \(\bar z_S=(z_{s} : s \in S)\).  For every \(L_0\)-formula
\(\beta(\bar x,\bar z;\bar y,\bar u)\), there is an
\(L_0\)-formula \(\operatorname{Av}_{\beta,S}(\bar y,\bar u)\) such that,
for any \(M\models T_0\) and \(\bar m,\bar c\in M\),
\(M\models\operatorname{Av}_{\beta,S}(\bar m,\bar c)\) if and only if some
elementary extension \(N\succcurlyeq_{L_0} M\) contains a realization of
\(\beta(\bar x,\bar z;\bar m,\bar c)\) with
\(z_s\notin G(M)\) for every \(s\in S\).  Equivalently, the partial type
\[
 \{\beta(\bar x,\bar z;\bar m,\bar c)\}
 \cup\{z_s\ne v:s\in S,\ v\in G(M)\}                     \tag{3.4}
\]
is consistent over the elementary diagram of \(M\).
\end{lemma}

\begin{proof}
If \(S=\emptyset\), take \(\operatorname{Av}_{\beta,S}\) to be
\(\exists\bar x\,\exists\bar z\,\beta\).  Otherwise let 
\[
 \chi(\bar x,\bar z;\bar y,\bar u,\bar v):=
 \beta(\bar x,\bar z;\bar y,\bar u)
 \wedge\bigwedge_{s\in S}z_s\ne v_s,
\]
where $\bar{v} = (v_s : s \in S)$ is a tuple of group variables. 
By nfcp of $T_0$, there is \(k\geq1\) such that any set of instances of \(\chi\) is
consistent if and only if each of its subfamilies of size at most \(k\) is
consistent.  Define
\[
 \operatorname{Av}_{\beta,S}(\bar y,\bar u):=
 \forall\bar v^1\in G^{|S|}\cdots\forall\bar v^k\in G^{|S|}\ 
 \exists\bar x\,\exists\bar z\,
 \left(
   \beta(\bar x,\bar z;\bar y,\bar u)
   \wedge
   \bigwedge_{\ell=1}^{k}\bigwedge_{s\in S}z_s\ne v_s^\ell
 \right).
\]
 As \(\bar v\) ranges over \(G(M)^{|S|}\), the
resulting family of instances of \(\chi\) is equivalent to (3.4): each
individual inequality in (3.4) occurs in such a block after arbitrary values
are chosen for the other coordinates.  And its finite
subfamilies are consistent over the elementary diagram of \(M\) exactly when
the corresponding existential formulas hold in \(M\). 
\end{proof}

Fix an \(L_{\mathrm{gc}}\)-formula \(\varphi(\bar x,\bar z;\bar y)\) of the normalized form
(3.3), with $\bar{z} = (z_1, \ldots, z_n)$.  For \(I\subseteq[n]\), all \(I\)-indexed tuples below, such as
\(\bar z_I=(z_i)_{i\in I}\), are written in increasing order of the
indices, and \(\eta_I:[|I|]\to[4]\) denotes the corresponding reindexing
of \(\eta|_I\).  For \(J\subseteq[n]\), let \(S=[n]\setminus J\), write
\(\bar u_J=(u_r)_{r\in J}\) using this convention, and set
\[
 \beta_J(\bar x,\bar z;\bar y,\bar u_J):=
 \psi_0(\bar x,\bar z;\bar y)
 \wedge\bigwedge_{r\in J}z_r=u_r
 \wedge\operatorname{Good}_{\eta_S}(\bar z_S).
\]
Define the \(L_{\mathrm{gc}}\)-formula 
\[
 \operatorname{Ext}_\varphi(\bar y):=
 \bigvee_{J\subseteq[n]}\exists\bar u_J
 \left[
   \bigwedge_{r\in J}
      C_{\eta(r)}(u_r)
   \wedge
   \operatorname{Av}_{\beta_J,S}(\bar y,\bar u_J)
 \right].                                                   \tag{3.5}
\]

\begin{lemma}\label{lem:extension-criterion}
For any \(\bar m\in M\models T_0^{\mathrm{col}}\), the formula
\(\operatorname{Ext}_\varphi(\bar m)\) holds in \(M\) if and only if
\(\varphi(\bar x,\bar z;\bar m)\) is realized in some
\(L_{\mathrm{gc}}\)-extension \(N\supseteq M\) satisfying
\(T_0^{\mathrm{col}}\).
\end{lemma}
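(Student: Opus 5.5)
We prove the two implications separately. In both directions the set \(J\) records which of the \(z_r\) land inside \(G(M)\).

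\emph{Forward direction.} Suppose \(\operatorname{Ext}_\varphi(\bar m)\) holds in \(M\), witnessed by some \(J\subseteq[n]\) and some \(\bar c=\bar u_J\in G(M)^{J}\) with \(M\models C_{\eta(r)}(c_r)\) for \(r\in J\) and \(M\models\operatorname{Av}_{\beta_J,S}(\bar m,\bar c)\).

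By \Cref{lem:definable-avoidance} there are an \(L_0\)-elementary extension \(N_0\succcurlyeq_{L_0}M\) and a tuple \((\bar a,\bar b)\) in \(N_0\) with the following properties:
\begin{enumerate}[label=(\alph*)]
\item \(\psi_0(\bar a,\bar b;\bar m)\) holds;
\item \(b_r=c_r\) for \(r\in J\);
\item \(b_s\notin G(M)\) for \(s\in S\);
\item \(N_0\models\operatorname{Good}_{\eta_S}(\bar b_S)\).
\end{enumerate}

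We now colour \(G(N_0)\). The set \(A=G(M)\) is \(\Gamma\)-invariant, because \(M\) is definably closed in \(N_0\), and it carries the good colouring inherited from \(M\). By \Cref{lem:formula-test}, condition (d) gives a total good colouring \(c'\) of \(G(N_0)\) with \(c'(b_s)=\eta(s)\) for \(s\in S\).

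We want one good colouring of \(G(N_0)\) that agrees with \(M\)'s colouring on \(G(M)\) and has the prescribed colours at the points \(b_s\). Since \(b_s\notin G(M)\) and \(G(M)\) is \(\Gamma\)-invariant, the orbits \(\Gamma b_s\) are disjoint from \(G(M)\).

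\begin{itemize}
\item Let \(B\) be the union of the orbits \(\Gamma b_s\), \(s\in S\). It is \(\Gamma\)-invariant and disjoint from \(G(M)\).
\item Restrict \(c'\) to \(B\). This restriction is good, because a union of whole orbits is closed under the \(a_{ij}\).
\item Apply \Cref{lem:gluing} to the family \(\{G(M),B\}\). The two sets are disjoint, so the colourings trivially agree on their intersection.
\end{itemize}

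This produces a total good colouring of \(G(N_0)\) extending \(M\)'s colouring and giving each \(b_s\) the colour \(\eta(s)\). Call the resulting \(L_{\mathrm{gc}}\)-structure \(N\). Then \(N\models T_0^{\mathrm{col}}\) and \(N\) extends \(M\) as an \(L_{\mathrm{gc}}\)-structure. For \(r\in J\) we have \(b_r=c_r\), whose colour is \(\eta(r)\). Hence \(\varphi(\bar a,\bar b;\bar m)\) holds in \(N\).

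\emph{Converse direction.} Suppose \(N\supseteq M\) is a model of \(T_0^{\mathrm{col}}\) and \((\bar a,\bar b)\) realizes \(\varphi(\bar x,\bar z;\bar m)\) in \(N\). Put
\[
J=\{r:b_r\in G(M)\},\qquad S=[n]\setminus J,\qquad u_r=b_r\ \ (r\in J).
\]

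First, \(M\models C_{\eta(r)}(u_r)\) for \(r\in J\), because \(M\) is an \(L_{\mathrm{gc}}\)-substructure of \(N\).

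Next we check the avoidance formula. Since \(T_0\) has quantifier elimination, \(T_0\) is model complete, so \(M\preccurlyeq_{L_0}N\) as \(L_0\)-structures. The tuple \((\bar a,\bar b)\) satisfies \(\psi_0\), the equalities \(z_r=u_r\) for \(r\in J\), and avoids \(G(M)\) on the coordinates in \(S\). It also satisfies \(\operatorname{Good}_{\eta_S}(\bar b_S)\): the colouring of \(N\) is a total good colouring of \(G(N)\) with \(b_s\) coloured \(\eta(s)\), so \Cref{lem:formula-test} applies. Thus \(N\) itself witnesses the condition in \Cref{lem:definable-avoidance}, and \(M\models\operatorname{Av}_{\beta_J,S}(\bar m,\bar u_J)\). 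Therefore \(\operatorname{Ext}_\varphi(\bar m)\) holds.

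The point needing the most care is in the forward direction: the colours of the new points \(b_s\) must be compatible with the existing colouring of \(G(M)\). This works precisely because the \(b_s\) lie outside the \(\Gamma\)-invariant set \(G(M)\), so their orbits avoid it entirely. That is what lets \Cref{lem:gluing} glue the two colourings with no interaction between them.
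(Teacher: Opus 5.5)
Your proof is correct and follows the paper's argument essentially step for step. It uses the same choice of \(J\) (the indices with \(b_r\in G(M)\)), \Cref{lem:formula-test} and \Cref{lem:definable-avoidance} to pass between realizations and \(\operatorname{Av}_{\beta_J,S}\), and \Cref{lem:gluing} applied to \(G(M)\) together with the union of the new orbits \(\Gamma b_s\). Those orbits avoid the \(\Gamma\)-invariant set \(G(M)\). The remaining differences are cosmetic: you treat the two implications in the opposite order, and you leave implicit that \(c_r\in G(M)\), which holds because the colour predicates are supported on \(G\).
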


\begin{proof}
Suppose first that \(N\models T_0^{\mathrm{col}}\) is an
\(L_{\mathrm{gc}}\)-extension of \(M\), and that
\((\bar a,\bar b)\), where \(\bar b=(b_1,\ldots,b_n)\in G(N)^n\), realizes
\(\varphi(\bar x,\bar z;\bar m)\) in \(N\).  Since \(T_0\) eliminates
quantifiers, $
 M|_{L_0}\preccurlyeq N|_{L_0}$. 
Set
\[
 J=\{r\in[n]:b_r\in G(M)\},\qquad S=[n]\setminus J,
\]
set \(c_r=b_r\) for \(r\in J\), and put
\(\bar c_J=(c_r)_{r\in J}\).  Since \(N\) extends \(M\) as an
\(L_{\mathrm{gc}}\)-structure and
\(N\models C_{\eta(r)}(b_r)\), we have
\(M\models C_{\eta(r)}(c_r)\) for every \(r\in J\).  For every
\(s\in S\), one has \(b_s\notin G(M)\).  As $N \models T_0^{\mathrm{col}}$, the predicates \(C_i^N\) give a total good coloring of \(G(N)\) assigning color \(\eta(s)\) to \(b_s\) for every \(s\in S\).  
\Cref{lem:formula-test}  gives $
 N|_{L_0}\models\operatorname{Good}_{\eta_S}(\bar b_S)$.  Since
\(N|_{L_0}\models\psi_0(\bar a,\bar b;\bar m)\) and \(b_r=c_r\) for
\(r\in J\),
it follows that $
 N|_{L_0}\models
 \beta_J(\bar a,\bar b;\bar m,\bar c_J)$. 
Together with \(b_s\notin G(M)\) for \(s\in S\), the tuple
\((\bar a,\bar b)\) realizes the partial \(L_0(M)\)-type (3.4), with
\(\beta=\beta_J\), in the elementary extension \(N|_{L_0}\) of
\(M|_{L_0}\). By \Cref{lem:definable-avoidance}, $
 M\models\operatorname{Av}_{\beta_J,S}(\bar m,\bar c_J)$. 
Thus \(J\) and \(\bar c_J\) witness the \(J\)-indexed disjunct of
\(\operatorname{Ext}_\varphi(\bar m)\).

Conversely, suppose that
\(M\models\operatorname{Ext}_\varphi(\bar m)\).  Choose
\(J\subseteq[n]\), put \(S=[n]\setminus J\), and choose
\(\bar c_J=(c_r)_{r\in J}\in M^{|J|}\) such that
\[
 M\models
 \bigwedge_{r\in J}C_{\eta(r)}(c_r)
 \wedge\operatorname{Av}_{\beta_J,S}(\bar m,\bar c_J).
\]
Since every color predicate $C_i$ is supported on \(G\), each \(c_r\) belongs to
\(G(M)\), so \(\bar c_J\in G(M)^{|J|}\).
By \Cref{lem:definable-avoidance}, there are an elementary
\(L_0\)-extension $
 N_0\succcurlyeq_{L_0}M|_{L_0}$ 
and tuples \(\bar a\in N_0\) and
\(\bar b=(b_1,\ldots,b_n)\in G(N_0)^n\) such that $
 N_0\models\beta_J(\bar a,\bar b;\bar m,\bar c_J)$ 
and \(b_s\notin G(M)\) for every \(s\in S\).  In particular,
\(b_r=c_r\) for every \(r\in J\). Let
\[
 X_{\mathrm{new}}=\bigcup_{s\in S}\Gm b_s\subseteq G(N_0),
\]
with \(X_{\mathrm{new}}=\emptyset\) if \(S=\emptyset\).  This set is
\(\Gm\)-invariant and disjoint from \(G(M)\).  Indeed, \(G(M)\) is
\(\Gm\)-invariant because each \(\gamma\in\Gm\) acts by a parameter-free
\(L_0\)-definable bijection.  Thus, if \(\gamma b_s\in G(M)\) for some
\(\gamma\in\Gm\) and \(s\in S\), then
\(b_s=\gamma^{-1}(\gamma b_s)\in G(M)\), a contradiction.

Suppose \(S\ne\emptyset\).  The definition of \(\beta_J\) gives $
 N_0\models\operatorname{Good}_{\eta_S}(\bar b_S)$. 
By \Cref{lem:formula-test}, there is a total good coloring
\(d:G(N_0)\to[4]\) such that
\(d(b_s)=\eta(s)\) for every \(s\in S\).  Its restriction to
\(X_{\mathrm{new}}\) is good because \(X_{\mathrm{new}}\) is
\(\Gm\)-invariant; put
\(c_{\mathrm{new}}=d|_{X_{\mathrm{new}}}\).  If \(S=\emptyset\), let
\(c_{\mathrm{new}}\) be the unique coloring of the empty set
\(X_{\mathrm{new}}\).

Let \(c_M:G(M)\to[4]\) be the good coloring defined by the predicates of
\(M\).  The sets \(G(M)\) and \(X_{\mathrm{new}}\) are disjoint
\(\Gm\)-invariant subsets of the free \(\Gm\)-set \(G(N_0)\).  By
\Cref{lem:gluing}, the colorings \(c_M\) and \(c_{\mathrm{new}}\) extend to
a total good coloring
\(c:G(N_0)\to[4]\).  Expand \(N_0\) to an \(L_{\mathrm{gc}}\)-structure
\(N\) by declaring, for \(i\in[4]\), $
 C_i^N(v) \ \Leftrightarrow  \ 
 v\in G(N_0) \land \ c(v)=i$. 
Then \(N\models T_0^{\mathrm{col}}\), and \(N\) extends \(M\) as an
\(L_{\mathrm{gc}}\)-structure.  The definition of \(\beta_J\) also gives
\(N_0\models\psi_0(\bar a,\bar b;\bar m)\).  For \(r\in J\), the element
\(b_r=c_r\) has color \(\eta(r)\) because
\(M\models C_{\eta(r)}(c_r)\) and \(c\) extends \(c_M\).  If \(s\in S\),
then \(c(b_s)=c_{\mathrm{new}}(b_s)=d(b_s)=\eta(s)\).
Therefore $
 N\models\varphi(\bar a,\bar b;\bar m)$, 
as required.
\end{proof}

\begin{proposition}\label{prop:model-companion}
The existentially closed models of \(T_0^{\mathrm{col}}\) are axiomatized by
\(T_0^{\mathrm{col}}\) and 
\[
 \forall\bar y\left(
   \operatorname{Ext}_\varphi(\bar y)
   \longrightarrow
   \exists\bar x\,\exists\bar z\,
      \varphi(\bar x,\bar z;\bar y)
 \right)                                                    \tag{3.6}
\]
for all normalized formulas (3.3).  Their theory is the model companion
\(T_0^{\mathrm{gc}}\).
\end{proposition}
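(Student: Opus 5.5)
The plan is to establish the two inclusions between the class of existentially closed models of \(T_0^{\mathrm{col}}\) and the class of models of \(T_0^{\mathrm{col}}\) satisfying all axioms (3.6). The main tool is \Cref{lem:extension-criterion}. Once the two classes coincide, the model companion follows by the standard criterion. \(T_0^{\mathrm{col}}\) is universal-existential (\(T_0\) has quantifier elimination, so it is \(\forall\exists\)-axiomatizable, and the coloring axioms are universal after unfolding \(u\in G\)). Hence every model of \(T_0^{\mathrm{col}}\) embeds in an existentially closed one. An elementary class of existentially closed models whose theory is \(T^{*}\) is then a model companion: every model of \(T^{*}\) embeds into a model of \(T_0^{\mathrm{col}}\) and conversely, and \(T^{*}\) is model complete because every model of it is existentially closed.

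\emph{Existentially closed models satisfy (3.6).} Let \(M\) be existentially closed and \(M\models\operatorname{Ext}_\varphi(\bar m)\). By \Cref{lem:extension-criterion} there is an extension \(N\supseteq M\) with \(N\models T_0^{\mathrm{col}}\) realizing \(\varphi(\bar x,\bar z;\bar m)\). Since \(\exists\bar x\exists\bar z\,\varphi\) is existential with parameters in \(M\), existential closedness gives \(M\models\exists\bar x\exists\bar z\,\varphi(\bar x,\bar z;\bar m)\).

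\emph{Models of (3.6) are existentially closed.} Let \(M\models T_0^{\mathrm{col}}\) satisfy all instances of (3.6). Let \(N\supseteq M\) be a model of \(T_0^{\mathrm{col}}\) satisfying an existential formula \(\exists\bar x\,\theta(\bar x;\bar m)\). By \Cref{lem:existential-normal-form}, applied in \(N\), some disjunct \(\exists\bar x\exists\bar z\,\varphi(\bar x,\bar z;\bar m)\) of normalized form (3.3) holds in \(N\). The forward direction of \Cref{lem:extension-criterion} then yields \(M\models\operatorname{Ext}_\varphi(\bar m)\). By (3.6), \(M\models\exists\bar x\exists\bar z\,\varphi(\bar x,\bar z;\bar m)\). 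Since the normal form is an equivalence modulo \(T_0^{\mathrm{col}}\) and \(M\models T_0^{\mathrm{col}}\), we get \(M\models\exists\bar x\,\theta(\bar x;\bar m)\).

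The only delicate point is ensuring that everything is genuinely first order. \(\operatorname{Ext}_\varphi\) is a formula: \(\operatorname{Av}_{\beta_J,S}\) is a single \(L_0\)-formula, by nfcp through \Cref{lem:definable-avoidance}, and \(\operatorname{Good}_{\eta_S}\) is a finite formula, by \Cref{lem:formula-test}. This is exactly what \Cref{lem:extension-criterion} packages, so I expect no real obstacle beyond recording the \(\forall\exists\) form of \(T_0^{\mathrm{col}}\) needed to embed arbitrary models into existentially closed ones.
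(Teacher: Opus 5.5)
Your proposal is correct and follows the paper's proof. Both directions of the equivalence come from \Cref{lem:existential-normal-form} together with the two directions of \Cref{lem:extension-criterion}, and the model companion then follows from inductivity of \(T_0^{\mathrm{col}}\) by the standard criterion. The only difference is cosmetic: you derive inductivity from \(\forall\exists\)-axiomatizability, whereas the paper checks directly that a union of a chain of models is a model. Note that the coloring axioms are universal only modulo \(T_0\), after replacing \(\delta_G\) and the graphs of the word maps by quantifier-free equivalents.
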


\begin{proof}
\Cref{lem:existential-normal-form,lem:extension-criterion} imply that a model $M \models T_0^{\mathrm{col}}$ is existentially closed exactly when it satisfies  the axiom
scheme (3.6) (when \(n=0\), the extension predicate is
\(\exists\bar x\,\psi_0(\bar x;\bar y)\), so the corresponding purely
\(L_0\) axiom is tautological). And \(T_0^{\mathrm{col}}\) is inductive. Indeed, let
\((M_i)_{i\in I}\) be a chain of models of \(T_0^{\mathrm{col}}\), and put
\(M=\bigcup_{i\in I}M_i\).  Since \(T_0\) eliminates quantifiers, the
inclusions between the \(L_0\)-reducts of the \(M_i\) are elementary, so
\(M|_{L_0}\models T_0\).  Every finite tuple from \(M\) lies in some
\(M_i\), where the color predicates axioms hold.  Hence they
also hold in \(M\), and \(M\models T_0^{\mathrm{col}}\). Thus 
$T_0^{\mathrm{col}}$ together with the axiom
scheme (3.6)  is the model companion of \(T_0^{\mathrm{col}}\) (see e.g.~\cite[Theorem~8.3.6]{Hodges}).
\end{proof}

\subsection{Types and algebraic closure}

\begin{lemma}\label{lem:independent-gluing}
Let \(N\models T_0\), and let \(C\subseteq A,B\subseteq N\) be
\(\acl_0\)-closed, with \(A\ind^0_C B\).  Suppose that \(G(A)\) and
\(G(B)\) carry good colorings which agree on \(G(C)\).  Then these colorings
combine and extend to a total good coloring of \(G(N)\).
\end{lemma}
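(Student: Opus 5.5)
The plan is to reduce the statement directly to \Cref{lem:gluing}, applied to the family consisting of the two sets \(G(A)\) and \(G(B)\) inside the free \(\Gm\)-set \(G(N)\). For this, two facts are needed: both sets are \(\Gm\)-invariant, and the two colorings agree on \(G(A)\cap G(B)\). Once these hold, \Cref{lem:gluing} combines the colorings into a good coloring of \(G(A)\cup G(B)\) and extends it to a total good coloring of \(G(N)\).

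\textbf{Invariance.} Since \(A\) and \(B\) are \(\acl_0\)-closed, they are in particular definably closed. By the remark at the start of \Cref{sec:stable-base}, \(G(A)\) and \(G(B)\) are therefore \(\Gm\)-invariant: each \(a_{ij}\) lies in \(\operatorname{dcl}_0(\emptyset)\), and left multiplication is \(\emptyset\)-definable.

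\textbf{The intersection.} An element of \(G(A)\cap G(B)\) is a group tuple all of whose coordinates lie in \(A\) and also in \(B\). Hence \(G(A)\cap G(B)=G(A\cap B)\), and it suffices to show \(A\cap B=C\). The inclusion \(C\subseteq A\cap B\) is given. For the converse, take \(e\in A\cap B\). Since \(A\ind^0_C B\), monotonicity gives \(e\ind^0_C e\). In the stable theory \(T_0\), a type that does not fork over \(C\) while containing the formula \(x=e\) must be algebraic over \(C\). So \(e\in\acl_0(C)=C\), using that \(C\) is \(\acl_0\)-closed. This gives \(G(A)\cap G(B)=G(C)\), where the two colorings agree by hypothesis.

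With both facts in hand, \Cref{lem:gluing} finishes the proof. Its orbit argument guarantees that goodness survives the union: any orbit meeting both \(G(A)\) and \(G(B)\) lies in \(G(C)\). No step seems to be a serious obstacle. The only substantive input is the standard stable-theory fact that \(e\ind^0_C e\) forces \(e\in\acl_0(C)\). Note that the independence hypothesis is used only to obtain \(A\cap B=C\).
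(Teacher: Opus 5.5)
Your proof is correct and is essentially the paper's argument. Both derive \(\Gm\)-invariance of \(G(A)\) and \(G(B)\) from their being \(\operatorname{dcl}_0\)-closed, and both get \(A\cap B=C\) from \(e\ind^0_C e\Rightarrow e\in\acl_0(C)=C\). Hence \(G(A)\cap G(B)=G(C)\), and \Cref{lem:gluing} finishes the proof.
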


\begin{proof}
The assumption implies \(A\cap B=C\): an element of the intersection
is independent from itself over \(C\), hence belongs to \(\acl_0(C)=C\).
If \(w\in\Gm\) and \(u\in G(A)\), then the map
\(u\mapsto wu\) is parameter-free definable, so every coordinate of the tuple \(wu\)
lies in \(\operatorname{dcl}_0(A)\subseteq\acl_0(A)=A\).  Thus
\(wu\in G(A)\), and the same argument for \(w^{-1}\) gives
\(wG(A)=G(A)\).  Likewise, \(wG(B)=G(B)\).  Hence these sets are
\(\Gm\)-invariant, and
\(G(A)\cap G(B)=G(A\cap B)=G(C)\).  Apply \Cref{lem:gluing}.
\end{proof}

\begin{proposition}\label{prop:relative-homogeneity}
Let \(M_1,M_2\) be models of the same completion of
\(T_0^{\mathrm{gc}}\).  Every color-preserving \(L_0\)-isomorphism between
small \(\acl_0\)-closed subsets of \(M_1\) and \(M_2\) is partial
\(L_{\mathrm{gc}}\)-elementary.
\end{proposition}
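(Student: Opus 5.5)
The plan is a back-and-forth argument. Since the two structures model the same completion, pass to sufficiently saturated elementary extensions $M_1^*,M_2^*$ of equal cardinality; both are still existentially closed models of $T_0^{\mathrm{col}}$. The target is to show that the family $\mathcal I$ of color-preserving $L_0$-isomorphisms between small $\acl_0$-closed subsets of $M_1^*$ and $M_2^*$ has the back-and-forth property. A standard argument then makes every member of $\mathcal I$ partial $L_{\mathrm{gc}}$-elementary. The empty map lies in $\mathcal I$ because $\acl_0(\emptyset)$ is handled by first mapping it. More carefully, the $L_0$-reducts are elementarily equivalent, and the colors of the $\emptyset$-definable group elements in $G(\acl_0(\emptyset))$ are fixed by the completion: each such element is $\emptyset$-algebraic, so its color is expressible by finitely many sentences about its conjugates.

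First I check that every $f\in\mathcal I$ is $L_0$-elementary. This follows from quantifier elimination in $T_0$ together with the fact that $M_1^*|_{L_0}\equiv M_2^*|_{L_0}$, which holds since both satisfy $T_0$ and $T_0$ is complete.

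The main step is the forth step. Given $f:A\to B$ in $\mathcal I$ and $a\in M_1^*$, put $A'=\acl_0(Aa)$. By saturation of $M_2^*|_{L_0}$, extend $f$ to an $L_0$-elementary $g:A'\to B'\subseteq M_2^*$. The map $g$ need not preserve colors, so I modify the realization using existential closedness. Pass to an $|A'|^+$-saturated $L_0$-elementary extension $N_0\succ M_2^*|_{L_0}$. In $N_0$, choose a copy $B''$ of $g(A')$ over $B$, realizing the same $L_0$-type over $B$, with $B''\ind^0_B M_2^*$; this uses nonforking extensions in the stable theory $T_0$. Then $\acl_0(B'')$ is closed, and we have $B''\ind^0_B M_2^*$ with $B\subseteq M_2^*$ algebraically closed. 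Transport the coloring of $G(A')$ via the $L_0$-map $A'\to B''$. This is good, since $G(A')$ is $\Gamma$-invariant and the coloring is the restriction of a total good coloring. It agrees on $G(B)$ with the coloring of $M_2^*$, because $f$ is color-preserving. By \Cref{lem:independent-gluing}, applied with $C=B$, the colorings of $G(B'')$ and $G(M_2^*)$ glue and extend to a total good coloring of $G(N_0)$. This gives $N\supseteq M_2^*$ with $N\models T_0^{\mathrm{col}}$, which contains a color-preserving copy of $A'$ over $f$.

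Now existential closedness of $M_2^*$ together with saturation pulls the copy back. Each finite piece of the type of $A'$ over $B$ is a quantifier-free $L_{\mathrm{gc}}$-condition: an $L_0$-formula over $B$ plus finitely many color literals. It is realized in $N$, hence in $M_2^*$ because $M_2^*$ is existentially closed. By saturation of $M_2^*$ in $L_{\mathrm{gc}}$, the whole type, namely the full $L_0$-type over $B$ together with all color literals on $G(A')$, is realized in $M_2^*$. The set realizing it is $\acl_0$-closed, since its $L_0$-type matches that of $A'$. The back step is symmetric.

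I expect the main obstacle to be this forth step. One has to ensure that the gluing hypotheses of \Cref{lem:independent-gluing} hold, and in particular that $B$ is $\acl_0$-closed inside $N_0$ so that $B''\cap M_2^*=B$. One also has to ensure that the realized copy in $M_2^*$ has exactly the right $L_0$-type over $B$, and not merely a type consistent with finitely many formulas. Saturation handles the latter. For the former, I will use that an elementary map sends $\acl_0$-closed sets to $\acl_0$-closed sets.
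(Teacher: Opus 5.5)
Your proof is correct. Its core step is the paper's: take an $L_0$-copy of $\acl_0(Aa)$ over $B$ that is $\ind^0$-independent from $M_2^*$ over $B$, transport the coloring, glue via \Cref{lem:independent-gluing}, and pull back by existential closedness.

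The difference is in how that step is used.
\begin{itemize}
\item \emph{The paper's route.} It invokes model-completeness to reduce to existential formulas $\exists\bar x\,\theta(\bar x,\bar a)$. It then needs only one witness $\bar c$ and one glued extension $\widetilde N\supseteq M_2$ containing a colored copy of $\acl_0(A\bar c)$. A single application of existential closedness of $M_2$ to the quantifier-free $\theta$ finishes. No saturation is needed, and the copy never has to be realized inside $M_2$.
\item \emph{Your route.} You build a back-and-forth system between saturated elementary extensions. So you must realize the entire colored type of $\acl_0(Aa)$ over $B$ inside $M_2^*$: finite satisfiability comes from existential closedness, and saturation then realizes a type in $|A'|$ variables. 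You also check that the realization is again $\acl_0$-closed, which you do.
\item \emph{What each buys.} Your version avoids model-completeness; it uses only that models of $T_0^{\mathrm{gc}}$ are existentially closed. The price is saturated models, plus two routine checks you leave implicit. First, $A$ and $B$ stay $\acl_0$-closed in $M_1^*,M_2^*$, since the $L_0$-reducts are elementary extensions. Second, partial elementarity between $M_1^*$ and $M_2^*$ descends to $M_1$ and $M_2$.
\end{itemize}

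Three small inaccuracies, none affecting correctness:
\begin{itemize}
\item Your paragraph on the empty map is wrong. $\emptyset$ is not $\acl_0$-closed, since the $a_{ij}$ lie in $\operatorname{dcl}_0(\emptyset)$. Also, colors of merely algebraic elements are not individually fixed by the completion. But nonemptiness of $\mathcal I$ is irrelevant: induction on formulas shows every member of $\mathcal I$ is elementary, and the given $f$ is a member.
\item $N_0$ must realize a type over all of $M_2^*$, so $|A'|^+$-saturation is not the right requirement. Compactness gives a suitable $N_0$ directly.
\item Treating the finite pieces as quantifier-free $L_{\mathrm{gc}}$-conditions uses quantifier elimination in $T_0$. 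It is worth saying so explicitly.
\end{itemize}
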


\begin{proof}
Let \(f:A\to B\) be such an isomorphism.  Since the completion is
model-complete, every formula is equivalent modulo it to an existential
formula.  It is therefore enough to show that \(f\) and \(f^{-1}\) preserve
existential formulas.

Suppose
\(M_1\models\exists\bar x\,\theta(\bar x,\bar a)\), where
\(\bar a\in A\) and \(\theta\) is  a quantifier-free \(L_{\mathrm{gc}}\)-formula, and choose a witness
\(\bar c\).  Put \(D=\acl_0(A\bar c)\). By  quantifier elimination in $T_0$, \(f\) is  \(L_0\)-elementary.  Stability and extension give, in an
elementary \(L_0\)-extension \(N\) of \(M_2\), an \(\acl_0\)-closed set
\(D'\) and an \(L_0\)-elementary isomorphism $
 g:D \to D'$ 
extending \(f\), with \(D'\ind^0_BM_2\).  Thus \(D'\cap M_2=B\).
(Here we applied nonforking extension to the transported type $f_{\ast}(\tp(D/A))$ over \(B\) of
an enumeration of \(D\); the last equality follows from anti-reflexivity of $\ind^0$ and
\(\acl_0(B)=B\)).

Let \(c^{M_i}:G(M_i)\to[4]\) denote the coloring defined by the predicates
of \(M_i\), for \(i=1,2\).  Since \(D\) is \(\acl_0\)-closed, \(G(D)\) is
\(\Gm\)-invariant.  Transport its coloring to \(G(D')\) by setting
\(c_{D'}(v)=c^{M_1}(g^{-1}(v))\).  The map \(g\) sends \(G(D)\) bijectively
onto \(G(D')\) and commutes with every parameter-free word map, so
\(c_{D'}\) is good.  Since \(g|_A=f\) and \(f\) is color-preserving,
\(c_{D'}\) agrees with \(c^{M_2}\) on \(G(B)\). 
\Cref{lem:independent-gluing} applied to \(D'\ind^0_BM_2\) gives a total
good coloring \(\widehat c:G(N)\to[4]\) extending \(c_{D'}\) and
\(c^{M_2}\).  Let \(\widetilde N\) be the \(L_{\mathrm{gc}}\)-expansion of
\(N\) defined by $
 C_i^{\widetilde N}(v) \Leftrightarrow 
 v\in G(N)\ \land \ \widehat c(v)=i$. 
The goodness of \(\widehat c\) gives
\(\widetilde N\models T_0^{\mathrm{col}}\), and the fact that
\(\widehat c\) extends \(c^{M_2}\) makes
\(M_2\subseteq\widetilde N\) an \(L_{\mathrm{gc}}\)-substructure.

Because \(G\) is \(\emptyset\)-definable, \(g\) preserves membership in
\(G\).  By the definitions of \(c_{D'}\) and \(\widehat c\), it preserves
every color predicate on \(G(D)\); off \(G\), all color predicates are false.
 Since \(D\) and \(D'\) are \(\acl_0\)-closed, they are
\(\operatorname{dcl}_0\)-closed \(L_0\)-substructures.  Thus \(g\) is an
\(L_{\mathrm{gc}}\)-isomorphism between the substructures induced on \(D\)
in \(M_1\) and on \(D'\) in \(\widetilde N\).  Since
\(\bar a,\bar c\subseteq D\) and \(\theta\) is quantifier-free, it follows
that $
 \widetilde N\models\theta(g(\bar c),f(\bar a))$. 
Now existential closedness of \(M_2\) among models of
\(T_0^{\mathrm{col}}\) gives
\(M_2\models\exists\bar x\,\theta(\bar x,f(\bar a))\).  The same argument
for \(f^{-1}\) gives the converse implication.  Thus \(f\) preserves all
existential formulas and is partial elementary.
\end{proof}

\begin{corollary}\label{cor:acl}
In every completion of \(T_0^{\mathrm{gc}}\), $
 \acl_{\mathrm{gc}}(A)=\acl_0(A)$ 
for every small set \(A\).
\end{corollary}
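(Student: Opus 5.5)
The inclusion \(\acl_0(A)\subseteq\acl_{\mathrm{gc}}(A)\) is immediate, since \(L_0\subseteq L_{\mathrm{gc}}\). For the converse, work in the monster model \(\U\) of a completion of \(T_0^{\mathrm{gc}}\). We may assume \(A=\acl_0(A)\), because \(\acl_{\mathrm{gc}}(A)=\acl_{\mathrm{gc}}(\acl_0(A))\). Let \(e\in\U\setminus A\). The aim is to produce infinitely many (indeed unboundedly many) distinct realizations of \(\tp_{\mathrm{gc}}(e/A)\). By \Cref{prop:relative-homogeneity}, it suffices to produce many \(\acl_0\)-closed sets \(D_\ell\), together with color-preserving \(L_0\)-isomorphisms \(D\to D_\ell\) fixing \(A\) pointwise, where \(D=\acl_0(Ae)\), such that the images \(e_\ell\) of \(e\) are pairwise distinct.

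We construct these sets as follows. Since \(e\notin\acl_0(A)\), the type \(\tp_{L_0}(D/A)\) is nonalgebraic over \(A\). Fix a large cardinal \(\kappa\) and, in an elementary \(L_0\)-extension \(N\) of \(\U|_{L_0}\), take a sequence \((D_\ell)_{\ell<\kappa}\) of realizations of the nonforking extension of \(\tp_{L_0}(D/A)\) that is independent over \(A\) and also independent from \(\U\) over \(A\). By stability this is possible via nonforking extensions. Because \(e\notin A=\acl_0(A)\) and the sets are \(A\)-independent, the images \(e_\ell\) are pairwise distinct, by anti-reflexivity of \(\ind^0\). Each \(D_\ell\) is \(\acl_0\)-closed, being the image of an \(\acl_0\)-closed set under an elementary map.

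Next we color. Transport the coloring of \(G(D)\) to each \(G(D_\ell)\) via the isomorphisms \(g_\ell\); the result is good because \(g_\ell\) commutes with the word maps. We must glue these colorings together with the coloring \(c^{\U}\) on \(G(\U)\). This gluing is the main technical step. \Cref{lem:independent-gluing} handles two sets, so we iterate, or argue directly via \Cref{lem:gluing}. The sets \(G(\U)\) and \(G(D_\ell)\) are \(\Gm\)-invariant, so we apply \Cref{lem:gluing} to the family \(\{G(\U)\}\cup\{G(D_\ell)\}_\ell\). It then suffices that the given colorings agree on pairwise intersections. By independence, \(D_\ell\cap D_{\ell'}=A\) and \(D_\ell\cap\U=A\), and all the colorings agree with \(c^{\U}\) on \(G(A)\) because each \(g_\ell\) fixes \(A\). \Cref{lem:gluing} therefore yields a total good coloring of \(G(N)\), making \(N\) a model of \(T_0^{\mathrm{col}}\) with \(\U\) as a substructure.

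Finally, embed this structure into a model of \(T_0^{\mathrm{gc}}\). Since \(T_0^{\mathrm{gc}}\) is the model companion, \(N\) embeds into an existentially closed model \(N'\). This \(N'\) is an elementary extension of \(\U\) by model completeness, so it lies in the same completion. The argument of \Cref{prop:relative-homogeneity} shows that the maps \(g_\ell\) are partial elementary, since color-preserving \(L_0\)-isomorphisms between small \(\acl_0\)-closed sets are elementary. Hence \(\tp_{\mathrm{gc}}(e_\ell/A)=\tp_{\mathrm{gc}}(e/A)\) for all \(\ell<\kappa\). This gives \(\kappa\) distinct realizations of \(\tp_{\mathrm{gc}}(e/A)\) in an elementary extension of \(\U\). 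By compactness, the type is then realized by unboundedly many elements, so it is nonalgebraic and \(e\notin\acl_{\mathrm{gc}}(A)\). The points needing most care are the pairwise-intersection computation and the observation that \(N'\) lies in the same completion. Smallness is not an issue: the sets \(D_\ell\) are only \(|D|\)-sized, and \Cref{prop:relative-homogeneity} is applied within the model \(N'\).
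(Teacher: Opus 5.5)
Your proposal is correct and is essentially the paper's proof. You take independent \(L_0\)-copies \(D_\ell\) of \(D=\acl_0(Ae)\) over \(A=\acl_0(A)\), free from \(\U\), glue the transported colorings to \(c^{\U}\) via \Cref{lem:gluing}, pass to an existentially closed extension, and apply \Cref{prop:relative-homogeneity}, so that \(D_\ell\cap D_{\ell'}=A\) forces the \(e_\ell\) to be distinct realizations of \(\tp_{\mathrm{gc}}(e/A)\). Two cosmetic points: \(\omega\) many copies already suffice, and you should say \emph{a} rather than \emph{the} nonforking extension, since types over \(A=\acl_0(A)\) need not be stationary in \(T_0\).
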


\begin{proof}
Only the inclusion from left to right requires proof.  Fix a completion and
let \(\U\) be its monster.  Suppose \(b\notin\acl_0(A)\), and put $
 C=\acl_0(A)$, $D=\acl_0(Cb)$. 
In a sufficiently saturated elementary extension \(N\) of the \(L_0\)-reduct of \(\U\),
stable extension gives, recursively for \(n<\omega\), \(L_0\)-elementary
isomorphisms \(g_n:D\to D_n\) over \(C\), with
\[
 D_n\ind^0_C\left(\U\cup\bigcup_{m<n}D_m\right).
\]
Hence \(D_n\cap\U=C\) and \(D_n\cap D_m=C\) for \(n\ne m\).

Transport the coloring of \(G(D)\) to every \(G(D_n)\).  These colorings
and the ambient coloring on \(G(\U)\) agree on all pairwise intersections,
so \Cref{lem:gluing} extends them to a total good coloring of \(G(N)\).  Put this
colored structure in a sufficiently saturated existentially closed extension
\(W\).  Model-completeness and quantifier elimination give $
 \U\preccurlyeq_{L_{\mathrm{gc}}}W$,
$ N\preccurlyeq_{L_0}W|_{L_0}$. 
Each \(g_n\) is now a color-preserving \(L_0\)-isomorphism between
\(\acl_0\)-closed sets, so \Cref{prop:relative-homogeneity} makes it partial
\(L_{\mathrm{gc}}\)-elementary.  Thus the elements \(g_n(b)\), \(n<\omega\),
are distinct realizations of \(\tp_{L_{\mathrm{gc}}}(b/A)\): if
\(g_n(b)=g_m(b)\) for \(n\ne m\), that element belongs to
\(D_n\cap D_m=C\), contrary to \(b\notin C\).  Therefore
\(b\notin\acl_{\mathrm{gc}}(A)\).
\end{proof}

\subsection{Simplicity}
We prove that the independence theorem over models holds in any completion of \(T_0^{\mathrm{gc}}\): 
\begin{lemma}\label{lem:independent-amalgamation}
Let \(T^\dagger\) be a completion of \(T_0^{\mathrm{gc}}\), let \(\U\) be
its monster, and let \(M\preccurlyeq\U\) be small.  Suppose that
\(M\subseteq A,B\subseteq\U\) and that finite tuples \(d_0,d_1\) satisfy
\[
 A\ind^0_MB,\qquad d_0\ind^0_MA,\qquad d_1\ind^0_MB,
 \qquad d_0\equiv_M^{L_{\mathrm{gc}}}d_1.
\]
Then there is \(d\in\U\) such that $
 d\equiv_A^{L_{\mathrm{gc}}}d_0$, 
 $d\equiv_B^{L_{\mathrm{gc}}}d_1$, and 
 $d\ind^0_MAB$. 
\end{lemma}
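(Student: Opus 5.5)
We reduce the independence theorem to the stable independence theorem in \(T_0\), combined with \Cref{lem:independent-gluing} and \Cref{prop:relative-homogeneity}. The plan is to replace everything by \(\acl_0\)-closures, amalgamate the \(L_0\)-types using stationarity over the model \(M\), and then color the result consistently.

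\emph{Step 1: normalize.} Replace \(A,B\) by \(\acl_0(A),\acl_0(B)\); the hypotheses are preserved, and by \Cref{cor:acl} these are also the \(L_{\mathrm{gc}}\)-algebraic closures. Put \(D_0=\acl_0(Md_0)\) and \(D_1=\acl_0(Md_1)\). Since \(d_0\equiv^{L_{\mathrm{gc}}}_M d_1\), there is an \(L_{\mathrm{gc}}\)-elementary map \(\sigma:D_0\to D_1\) over \(M\) sending \(d_0\mapsto d_1\); it preserves \(\acl_0\) and colors.

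\emph{Step 2: amalgamate in \(T_0\).} Since \(T_0\) is stable and \(M\) is a model, types over \(M\) are stationary. Work in an \(L_0\)-elementary extension \(N\) of \(\U|_{L_0}\), and realize the unique nonforking extension of \(\tp_{L_0}(d_0/M)\) to \(\U\) by some \(d'\) with \(d'\ind^0_M\U\). Then \(d'\equiv^{L_0}_A d_0\) and \(d'\equiv^{L_0}_B d_1\) by stationarity, and hence also \(d'\equiv^{L_0}_{AB}\) the nonforking extension. Set \(D'=\acl_0(Md')\). Take the \(L_0\)-elementary map \(g_0:\acl_0(Ad_0)\to\acl_0(Ad')\) over \(A\) and similarly \(g_1:\acl_0(Bd_1)\to\acl_0(Bd')\) over \(B\), both extending the map to \(D'\). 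The maps \(g_0|_{D_0}\) and \(g_1\circ\sigma|_{D_0}\) agree on \(d_0\) and on \(M\), but may differ on \(\acl_0(Md_0)\). Fix this by stationarity over \(M\), which gives uniqueness of \(\tp(\acl_0(Md)/A)\) given the independence: compose \(g_1\) with an automorphism of \(\acl_0(Bd')\) over \(B\) so that the two agree on \(D'\). This uses that the type of the enumeration of \(\acl_0(Md_0)\) over \(M\) is also stationary.

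\emph{Step 3: color and conclude.} Transport the colorings from \(\acl_0(Ad_0)\) and \(\acl_0(Bd_1)\) via \(g_0,g_1\). They agree on \(\acl_0(Ad')\cap\acl_0(Bd')\). This is the main obstacle: by stable independence (\(A,B,d'\) independent over \(M\)), this intersection should be \(\acl_0(Md')\cup\ldots\), and in general one needs it to be covered by \(D'\), \(A\), and \(B\), where the colorings agree by construction. In fact one also needs \(\acl_0(Ad')\cup\acl_0(Bd')\) and \(\U\) glued, and we would handle this by noting that the union of \(G(\U)\), \(G(\acl_0(Ad'))\) and \(G(\acl_0(Bd'))\) consists of \(\Gamma\)-invariant sets whose pairwise intersections are \(G(A)\), \(G(B)\) and \(G(D')\). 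These equalities follow from the independence calculus in stable theories: \(\acl_0(Ad')\cap\U=A\) since \(d'\ind^0_A\U\), and \(\acl_0(Ad')\cap\acl_0(Bd')=\acl_0(Md')\) from \(A\ind^0_{Md'}B\). Then \Cref{lem:gluing} gives a total good coloring of \(G(N)\). Embed into an existentially closed saturated extension \(W\succcurlyeq\U\) as in \Cref{cor:acl}. By \Cref{prop:relative-homogeneity}, \(g_0\) and \(g_1\) are partial elementary, so \(d'\equiv_A d_0\) and \(d'\equiv_B d_1\) in \(W\), and \(d'\ind^0_M AB\). Finally, by saturation and homogeneity of \(\U\), pull \(d'\) back to a realization \(d\in\U\) of \(\tp(d'/AB)\).
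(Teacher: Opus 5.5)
Your proposal is correct and follows essentially the same route as the paper. You normalize to $\acl_0$-closed sets and place a copy $D'$ of $\acl_0(Md_0)$ independent from $\U$ over $M$. You then use stationarity over $M$ to get $L_0$-elementary maps over $A$ and over $B$ that agree on $D'$ modulo the color-preserving $\sigma$. Finally, you glue the colorings on $G(\U)$, $G(\acl_0(Ad'))$, $G(\acl_0(Bd'))$ using the intersection computations, and conclude by relative homogeneity and saturation.

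The only difference is cosmetic. The paper fixes $t\colon D_0\to D'$ first and extends it to $f_A\supseteq t$ and $f_B\supseteq t\circ s^{-1}$. You instead choose $g_0,g_1$ first and then correct $g_1$ by an elementary permutation of $\acl_0(Bd')$ over $B$, which is the same stationarity argument applied to an enumeration of $D_1$.
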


\begin{proof}
Replacing \(A\) and \(B\) by their \(\acl_0\)-closures preserves the
hypotheses and strengthens the conclusion, so assume that they are
\(\acl_0\)-closed.  Put \(D_i=\acl_0(Md_i)\).  Choose
\(\sigma\in\operatorname{Aut}_{L_{\mathrm{gc}}}(\U/M)\) with
\(\sigma(d_0)=d_1\), and let \(s=\sigma|_{D_0}:D_0\to D_1\).

In a sufficiently saturated elementary extension
\(N\succcurlyeq_{L_0}\U|_{L_0}\), choose $D' \equiv^{L_0}_M D_0$ with \(D'\ind^0_M\U\), and chose  an \(L_0\)-elementary map $
 t:D_0\longrightarrow D'$ 
fixing \(M\) pointwise,  and put \(d'=t(d_0)\).  The independence
hypotheses give \(D_0\cap A=M\) and \(D_1\cap B=M\); moreover,
\(D'\cap A=D'\cap B=M\).  Stationarity over the
model \(M\) therefore gives \(L_0\)-elementary maps
\[
\begin{aligned}
 f_A:\acl_0(AD_0)&\longrightarrow X:=\acl_0(AD'),\\
 f_B:\acl_0(BD_1)&\longrightarrow Y:=\acl_0(BD'),
\end{aligned}
\]
where \(f_A\) fixes \(A\) and extends \(t\), while \(f_B\) fixes \(B\) and
extends \(t\circ s^{-1}\) (this is stationarity because
\(D_0\ind^0_MA\), \(D_1\ind^0_MB\), and \(D'\ind^0_M\U\)).

We also have
\[
 X\cap\U=A,\qquad Y\cap\U=B,\qquad X\cap Y=D'.             \tag{3.7}
\]
Indeed, \(D'\ind^0_M\U\) gives
\(X\ind^0_A\U\) and \(Y\ind^0_B\U\), yielding the first two equalities.
For the third, \(A\ind^0_MB\) and \(D'\ind^0_MAB\) give
\(AD'\ind^0_{D'}BD'\),  hence
\(X\ind^0_{D'}Y\).

Push the ambient colorings on the domains of \(f_A\) and \(f_B\) forward to
\(G(X)\) and \(G(Y)\).  The transported colorings are good because the maps
commute with the $\emptyset$-definable \(\Gm\)-action.  They agree with the ambient
coloring on \(G(A)\) and \(G(B)\), since the maps fix \(A\) and \(B\).  They agree with each
other on \(G(D')\): every element there is \(t(x)\) for some
\(x\in G(D_0)\), and $
 f_A(x)=t(x)=f_B(s(x))$ for all $x\in G(D_0)$, 
and \(s\) is color-preserving. The pairwise intersections are therefore $
 G(\U)\cap G(X)=G(A)$, $G(\U)\cap G(Y)=G(B)$, $
 G(X)\cap G(Y)=G(D')$. 
 Thus (3.7) and \Cref{lem:gluing}, applied to
the \(\Gm\)-invariant sets \(G(\U),G(X),G(Y)\), give a total good coloring
of \(G(N)\) extending all three colorings.  Let
\(\widetilde N\models T_0^{\mathrm{col}}\) be the
resulting expansion of \(N\).

Since \(T_0^{\mathrm{gc}}\) is the model companion of
\(T_0^{\mathrm{col}}\), extend \(\widetilde N\) to a model of
\(T_0^{\mathrm{gc}}\), and pass to a sufficiently
saturated elementary extension \(W\).  Model-completeness and quantifier
elimination give $
 \U\preccurlyeq_{L_{\mathrm{gc}}}W \models T^\dagger$ and $N\preccurlyeq_{L_0}W|_{L_0}$. 
Consequently the domains and ranges of \(f_A,f_B\) remain
\(\acl_0\)-closed in $W$, and the transported colorings make both maps
color-preserving.  Then, by proposition \ref{prop:relative-homogeneity}, they are 
\(L_{\mathrm{gc}}\)-elementary.  Hence \(d'=f_A(d_0)=f_B(d_1)\) realizes
both required colored types, while \(D'\ind^0_M\U\) gives
\(d'\ind^0_MAB\).  Finally, since \(\U\preccurlyeq W\) and \(\U\) is
saturated, choose \(d\in\U\) realizing
\(\tp_{L_{\mathrm{gc}}}(d'/AB)\). \end{proof}

\begin{proposition}\label{prop:simplicity}
Every completion of \(T_0^{\mathrm{gc}}\) is simple, and its nonforking
relation coincides with nonforking in $T_0$.
\end{proposition}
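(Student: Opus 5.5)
We will use the Kim--Pillay characterization: a complete theory is simple, and its forking independence equals a given ternary relation \(\ind^*\) on small subsets of \(\U\), provided \(\ind^*\) is invariant under automorphisms and satisfies symmetry, transitivity, finite character, local character, extension, and the independence theorem over models. We take \(\ind^*:=\ind^0\), i.e.\ \(A\ind^*_C B\) iff \(\tp_{L_0}(A/BC)\) does not fork over \(C\) in \(T_0\), computed in the \(L_0\)-reduct of the monster \(\U\) of a fixed completion \(T^\dagger\) of \(T_0^{\mathrm{gc}}\). The reduct is a saturated model of \(T_0\).

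Most axioms transfer from \(T_0\). Invariance holds because every \(L_{\mathrm{gc}}\)-automorphism is an \(L_0\)-automorphism. Symmetry, transitivity, finite character and local character are statements about \(\ind^0\) alone, and they hold because \(T_0\) is stable. The independence theorem over models is exactly \Cref{lem:independent-amalgamation}. To match the usual form, take \(A\ind^0_M B\), \(c_0\ind^0_M A\), \(c_1\ind^0_M B\) and \(c_0\equiv^{L_{\mathrm{gc}}}_M c_1\); the lemma yields \(c\) realizing \(\tp_{L_{\mathrm{gc}}}(c_0/A)\cup\tp_{L_{\mathrm{gc}}}(c_1/B)\) with \(c\ind^0_M AB\). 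The version with infinite tuples follows by finite character and compactness.

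The main point is \emph{extension}, which must be checked in the expanded language. Given a small tuple \(a\), and \(C\subseteq B\), we need \(a'\equiv^{L_{\mathrm{gc}}}_C a\) with \(a'\ind^0_C B\). We may assume \(C\) and \(B\) are \(\acl_0\)-closed, since \(\acl_0\)-closures do not affect \(\ind^0\) and \(\acl_0(C)\) is fixed setwise by \(L_{\mathrm{gc}}\)-automorphisms over \(C\).

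\begin{enumerate}[label=(\arabic*)]
\item Put \(D=\acl_0(Ca)\). In an elementary extension \(N\succcurlyeq_{L_0}\U|_{L_0}\), stable extension gives an \(L_0\)-elementary \(g:D\to D'\) over \(C\) with \(D'\ind^0_C\U\).
\item By anti-reflexivity, \(D'\cap\U=C\).
\item Transport the coloring of \(G(D)\) to \(G(D')\) along \(g\). Glue it with the coloring of \(G(\U)\) using \Cref{lem:independent-gluing}, and expand \(N\) accordingly to a model of \(T_0^{\mathrm{col}}\).
\item Embed this structure into a saturated model \(W\) of \(T^\dagger\). As in \Cref{cor:acl}, \(\U\preccurlyeq_{L_{\mathrm{gc}}}W\).
\item \Cref{prop:relative-homogeneity} makes \(g\) partial elementary, so \(g(a)\equiv^{L_{\mathrm{gc}}}_C a\), and \(g(a)\ind^0_C\U\).
\item Pull \(g(a)\) back into \(\U\) by realizing \(\tp_{L_{\mathrm{gc}}}(g(a)/B)\) there. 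This preserves \(\ind^0_C B\), since that is a property of the \(L_0\)-type over \(B\).
\end{enumerate}

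The delicate issue is which version of Kim--Pillay is being invoked. The independence theorem is required only over models, which is what \Cref{lem:independent-amalgamation} provides. Some formulations also require that \(\ind^*\) be defined from the \(L_{\mathrm{gc}}\)-type, or that it satisfy extension over arbitrary small base sets rather than models. Both are fine here: \(\ind^0\) depends only on the \(L_0\)-type, hence only on the \(L_{\mathrm{gc}}\)-type, and the extension argument above works over any \(\acl_0\)-closed base. With all axioms verified, Kim--Pillay gives that \(T^\dagger\) is simple and that its nonforking relation coincides with \(\ind^0\), proving the proposition.
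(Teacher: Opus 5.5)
Your proposal is correct and is essentially the paper's proof. Both verify the Kim--Pillay axioms for $\ind^0$: invariance and the stable axioms are inherited from $T_0$, and the independence theorem is \Cref{lem:independent-amalgamation}. Extension is proved the same way: take an $\ind^0$-independent copy of $\acl_0(Ca)$ over $\acl_0(C)$, glue colorings via \Cref{lem:independent-gluing}, apply \Cref{prop:relative-homogeneity} in a saturated existentially closed extension $W$, and pull back into $\U$. The one detail you leave implicit is that $D,D'$ stay $\acl_0$-closed in $W$ because $N\preccurlyeq_{L_0}W|_{L_0}$, which relative homogeneity needs; the paper states this explicitly.
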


\begin{proof}
Fix a completion \(T^\dagger\) and its monster \(\U\).  We show that \(\ind^0\) satisfies the Kim--Pillay axioms  \cite[Definition~4.1 and Theorem~4.2]{KimPillay}.  Since every
\(L_{\mathrm{gc}}\)-automorphism is an \(L_0\)-automorphism, invariance
follows from invariance of nonforking in \(T_0\).  Stability of \(T_0\) also
gives local character, finite character, symmetry, and transitivity.  
\Cref{lem:independent-amalgamation} is precisely the independence theorem
over models for \(\ind^0\). 
Thus only extension remains to be verified.

Let \(a\) be a finite tuple and let
\(C\subseteq B\subseteq\U\).  Put \(E=\acl_0(C)\) and
\(D=\acl_0(Ea)\).  In an elementary \(L_0\)-extension \(N\) of
\(\U|_{L_0}\), choose an \(L_0\)-elementary map
\(t:D\to D'\) fixing \(E\) pointwise, with \(D'\ind^0_E\U\).  Transport the coloring
of \(G(D)\) to \(G(D')\).  It is good and agrees with the ambient coloring
on \(G(E)\), so \Cref{lem:independent-gluing} extends these two colorings to
a total good coloring of \(G(N)\).  Let
\(\widetilde N\models T_0^{\mathrm{col}}\) be the resulting colored
expansion. Extend \(\widetilde N\) to a model \(V\models T_0^{\mathrm{gc}}\), then  
\(\U\preccurlyeq_{L_{\mathrm{gc}}}V\); in particular,
\(V\models T^\dagger\).  Pass to a sufficiently saturated elementary
extension \(W\succcurlyeq_{L_{\mathrm{gc}}} V\).  Quantifier elimination for \(T_0\) gives
\(N\preccurlyeq_{L_0}W|_{L_0}\), so \(D\) and \(D'\) remain
\(\acl_0\)-closed in \(W\).  The map \(t\) is color-preserving, hence
\(L_{\mathrm{gc}}\)-elementary by \Cref{prop:relative-homogeneity}.  Thus
\(t(a)\equiv_C^{L_{\mathrm{gc}}}a\).  Moreover,
\(D'\ind^0_E\U\) gives \(t(a)\ind^0_EB\), and, since
\(E=\acl_0(C)\), this implies \(t(a)\ind^0_CB\).  As
\(\U\preccurlyeq W\) and \(\U\) is saturated, choose \(a'\in\U\) realizing
\(\tp_{L_{\mathrm{gc}}}(t(a)/B)\).  Then
\(a'\equiv_C^{L_{\mathrm{gc}}}a\), and equality of the complete
\(L_0(B)\)-types gives \(a'\ind^0_CB\).  
\end{proof}

\begin{proof}[Proof of \Cref{thm:generic-color}]
The theorem follows from
\Cref{lem:measure-obstruction,prop:model-companion,prop:relative-homogeneity,cor:acl,prop:simplicity}.
\end{proof}

\section{The free-group counterexample}
\label{sec:free-group}

Let
\[
 F=F_{12}=\langle a_{ij}:i\in[4],\ j\in[3]\rangle,
 \qquad F^+=F*\langle c\rangle=F_{13}.
\]
Let \(L_{\mathrm{grp}}\) be the group language,
\(L_a=L_{\mathrm{grp}}\cup\{a_{ij}:i\in[4],j\in[3]\}\), and $
  T_a=\operatorname{Th}_{L_a}(F,(a_{ij})_{i,j})$. 
Its reduct to \(L_{\mathrm{grp}}\) is the common theory of nonabelian free
groups.  Sela proved stability \cite{SelaStability}, and Sklinos proved nfcp
\cite{SklinosNFCP}; naming constants preserves both
properties.  Moreover, the free-factor inclusion $
 F\preccurlyeq F^+$  
is elementary in the pure group language; see
\cite[Theorem B(i)]{PillayFree}.  Since the inclusion fixes every
\(a_{ij}\), it remains elementary in \(L_a\).

Let \(T_0\) be  a relational
Morleyization of \(T_a\) in the language \(L_0\).  Thus \(T_0\)
is complete, stable, nfcp, and has quantifier elimination.  We continue to write \(F\) and \(F^+\) for
their \(L_0\)-expansions, the inclusion $F\preccurlyeq F^+$  remains $L_0$-elementary. 
The group is the home sort and the \(a_{ij}\) are named, so the hypotheses of
\Cref{thm:generic-color} hold.  Fix a completion
\(T^\sharp\) of \(T_0^{\mathrm{gc}}\), and let
\(\U\models T^\sharp\) be a monster model with saturation cardinal \(\bar\kappa\) larger than
\(2^{\aleph_0}\).  By \Cref{thm:generic-color},
\(T^\sharp\) is simple.

\begin{proposition}
\label{prop:bounded-orbit}
There exist an elementary extension
\(W\succcurlyeq_{L_{\mathrm{gc}}}\U\), an element
\(c\in W\), and a (non-definable) subgroup \(H\leq\U\) such that
\[
 [\U:H]\leq2^{\aleph_0}
 \quad\text{and}\quad
 c\equiv_{\U}^{L_{\mathrm{gc}}}hc\quad\text{for every }h\in H. \tag{4.2}
\]
\end{proposition}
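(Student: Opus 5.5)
The construction follows the sketch in the introduction. Enumerate the finite-index normal subgroups $N_n\trianglelefteq F$ with quotient maps $q_n:F\to R_n$. Since $\U|_{L_0}\succcurlyeq F$ and $F\preccurlyeq F^+$, choose an index set $I$ and an ultrafilter $\mathcal D$ on $I$ so that the $L_0$-reduct of $\U$ embeds elementarily into $P^*=F^I/\mathcal D$. Put $Q^*=(F^+)^I/\mathcal D$. By Łoś, $P^*\preccurlyeq_{L_0}Q^*$.

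Each $q_n$ is definable in $F$: it is determined by finitely many coset representatives, and membership in $N_n$ (a finitely generated subgroup, hence finitely many generators) is expressed by a formula with named constants. So $q_n$ extends coordinatewise to $q_n^*:P^*\to R_n$, and also to $Q^*$, by letting $c\mapsto 1$, i.e. via $F^+\to F\to R_n$. Set $H=\U\cap\bigcap_n\ker q_n^*$. Then $\U/H$ embeds into $\prod_n R_n$, a countable product of finite groups, so $[\U:H]\le 2^{\aleph_0}$.

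For $h=[h_i]\in H$, let $\nu_h$ be the coordinatewise automorphism of $Q^*$ induced by $\nu_{h_i}$, where $\nu_{h_i}$ fixes $F$ and sends $c\mapsto h_ic$. Then:
\begin{itemize}
\item $\nu_h$ is a group automorphism fixing the constants $a_{ij}$, hence an $L_0$-automorphism.
\item $\nu_h$ fixes $P^*$ pointwise.
\item $\nu_h([c])=h[c]$.
\end{itemize}
Let $\Sigma\le\operatorname{Aut}_{L_0}(Q^*/P^*)$ be the group generated by the $\nu_h$, $h\in H$.

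The goal is a total good coloring of $Q^*$ that extends the coloring of $\U$ (transported to $P^*$) and is $\Sigma$-invariant. The plan uses \Cref{lem:gluing} on the free $\Gamma$-set $Q^*/\Sigma$.

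\emph{Key claim.} The left $\Gamma$-action on $Q^*/\Sigma$ is free. Concretely: if $\sigma(x)=\gamma x$ with $\sigma\in\Sigma$ and $\gamma\in\Gamma$, then $\gamma=1$.

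Proof idea for the key claim:
\begin{enumerate}
\item Each $\nu_h$ is compatible with the extended maps $q_n^*:Q^*\to R_n$, because $q_n^*(h)=1$ on both $F$ and $c$. Hence $q_n^*(\sigma x)=q_n^*(x)$ for every $\sigma\in\Sigma$.
\item If $\sigma x=\gamma x$, then $q_n(\gamma)q_n^*(x)=q_n^*(x)$, so $q_n(\gamma)=1$ for all $n$.
\item Residual finiteness of $F$ then gives $\gamma=1$.
\end{enumerate}

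\emph{Obstacle.} Since $q_n^*$ kills $c$, the point $x$ itself is not controlled by this argument. One must also check that $\Sigma$ acts on $Q^*/\Sigma$ in a way compatible with $\Gamma$. This holds because each $\nu_h$ commutes with left multiplication by the constants $a_{ij}$, so $\Gamma$ acts on $\Sigma$-orbits.

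\emph{Building the coloring.} The $\Sigma$-orbits of points of $P^*$ are singletons, so $P^*$ embeds into $Q^*/\Sigma$. To keep the coloring goodness local, I would restrict attention to $\Gamma$-orbits that are genuinely separated:
\begin{enumerate}
\item Color $Q^*/\Sigma$ by \Cref{lem:gluing}. Take the image of $G(\U)\subseteq P^*$, which is $\Gamma$-invariant, with its given coloring. Extend it to a total good coloring of $Q^*/\Sigma$ using freeness.
\item Pull this coloring back along $Q^*\to Q^*/\Sigma$. The map is $\Gamma$-equivariant, and on each $\Gamma$-orbit it is injective, since freeness forces $\gamma x\ne x$ modulo $\Sigma$. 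Hence the pullback is good and $\Sigma$-invariant.
\item Each $\nu_h$ is then a color-preserving $L_0$-automorphism of the colored $Q^*$ fixing $\U$.
\end{enumerate}

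\emph{Concluding (4.2).}
\begin{enumerate}
\item Extend the colored $Q^*$ to a model of $T_0^{\mathrm{gc}}$ and pass to a saturated $W$. Then $\U\preccurlyeq_{L_{\mathrm{gc}}}W$, by model completeness together with the fact that $\U|_{L_0}\preccurlyeq Q^*$.
\item Let $c=[c]_{\mathcal D}$, and let $D=\acl_0(\U c)$ computed in $Q^*$. This is the same set computed in $W$, since $Q^*\preccurlyeq_{L_0}W$.
\item $\nu_h$ maps $D$ onto $\acl_0(\U hc)$ and is a color-preserving $L_0$-isomorphism fixing $\U$.
\item \Cref{prop:relative-homogeneity} then makes $\nu_h|_D$ partial $L_{\mathrm{gc}}$-elementary. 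Therefore $c\equiv_\U hc$.
\end{enumerate}

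\emph{Expected main difficulty.} I expect the freeness of the $\Gamma$-action on $\Sigma$-orbits to be the crux, specifically making precise that $q_n^*$ is $\Sigma$-invariant on all of $Q^*$. A secondary point is that colors of elements of $P^*\setminus\U$ must also be handled. These lie in $\Sigma$-fixed singleton orbits, so \Cref{lem:gluing} covers them without extra work.
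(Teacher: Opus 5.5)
Your proposal is essentially the paper's proof: the same ultrapower pair $P^*\preccurlyeq Q^*$, the same kernel $H$ of the finite-quotient maps, the coordinatewise automorphisms $\nu_h$, freeness of the $\Gamma$-action on $Q^*/\Sigma$ via $\Sigma$-invariance of $\widetilde q_n^*$ plus residual finiteness, the pulled-back $\Sigma$-invariant coloring, and relative homogeneity. The only difference is that you apply relative homogeneity to $\acl_0(\U c)$ directly, which is fine since $\U$ is small in $W$, whereas the paper uses finite $A\subseteq\U$ and finite character.

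One correction: your claim that $q_n$ is definable in $F$ is false. Membership in a finitely generated subgroup is not expressed by a single first-order formula, and indeed nonabelian free groups are connected (Poizat), so no proper finite-index subgroup is definable. You do not need this claim, since finiteness of $R_n$ already lets you define $q_n^*$ and $\widetilde q_n^*$ coordinatewise through $\mathcal D$. Likewise, $\widetilde q_n^*\circ\nu_h=\widetilde q_n^*$ must be checked coordinatewise on the $\mathcal D$-large set where $q_n(h_i)=1$, as in the paper's identity $(*)$. Checking it on $P^*\cup\{c\}$ is not enough, because that set does not generate $Q^*$.
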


\begin{proof}
\emph{An ultrapower construction.}
Since \(F\) is finitely generated, it has only countably many finite-index
normal subgroups (each is the kernel of an epimorphism onto a finite group
\(R\), and for fixed
\(R\) there are at most \(|R|^{12}\) homomorphisms \(F\to R\)).  Enumerate them as
\((N_n)_{n<\omega}\), put \(R_n=F/N_n\), and let \(q_n:F\to R_n\) be the quotient
map.  Residual finiteness of \(F\) gives
\(\bigcap_{n<\omega}N_n=\{1\}\); equivalently, the maps \(q_n\) separate points:
if \(f\neq g \in F\), then \(q_n(f)\neq q_n(g)\) for some \(n\).  Extend \(q_n\) by
\(\widetilde q_n(c)=1\) to a homomorphism $
 \widetilde q_n:F^+ \to R_n$. 
For \(h\in F\), define \(\nu_h:F^+\to F^+\) by
\(\nu_h|_F=\mathrm{id}_F\) and \(\nu_h(c)=hc\).  The universal property of the
free product gives a unique such homomorphism, with inverse \(\nu_{h^{-1}}\), so
\(\nu_h\in\operatorname{Aut}(F^+/F)\).  It is an \(L_0\)-automorphism: it
fixes the named \(a_{ij}\), and all other \(L_0\)-relations are definitional
Morley predicates.

We shall use the elementary identity
\[
 q_n(h)=1\quad\Longrightarrow\quad
 \widetilde q_n\bigl(\nu_h(y)\bigr)=\widetilde q_n(y)
 \qquad(n<\omega,\ h\in F,\ y\in F^+).                     \tag{*}
\]
If \(q_n(h)=1\), then \(\widetilde q_n\circ\nu_h\) and \(\widetilde q_n\)
agree on \(F\), since \(\nu_h|_F=\mathrm{id}_F\), while
\(\widetilde q_n(\nu_h(c))=\widetilde q_n(hc)
=q_n(h)\widetilde q_n(c)=1=\widetilde q_n(c)\).  They therefore agree on
\(F^+=F*\langle c\rangle\), proving (*).

Since \(\U|_{L_0}\equiv F\), the ultrapower embedding lemma gives an
ultrafilter \(\mathcal D\) on a set \(I\) and an \(L_0\)-elementary embedding $
 e:\U|_{L_0} \to P^*:=F^I/\mathcal D$. Write
\[
 \Gamma=\langle a_{ij}:i\in[4],\ j\in[3]\rangle\leq\U.
\]
Because the \(a_{ij}\) are named, \(e\) identifies \(\Gamma\) pointwise with
the diagonal copy of \(F\) in \(P^*\).

Put
\[
 Q^*:=(F^+)^I/\mathcal D,\qquad
 c=[i\mapsto c]_{\mathcal D}\in Q^*.
\]
The elementary  inclusion \(F\preccurlyeq_{L_0}F^+\) and
\L o\'s's theorem give \(P^*\preccurlyeq_{L_0}Q^*\).  Identifying \(\U\)
with \(e(\U)\) and \(P^*\) with its image under the coordinatewise inclusion
in \(Q^*\), we have
\[
 \U|_{L_0}\preccurlyeq P^*\preccurlyeq Q^*.                 \tag{4.3}
\]

The quotient maps also extend coordinatewise.  For
\(x=[x_i]_{\mathcal D}\in P^*\), define \(q_n^*(x)\) to be the unique
\(r\in R_n\) such that $
 \{i\in I:q_n(x_i)=r\}\in\mathcal D$; 
such a unique \(r\) exists because \(R_n\) is finite.  Define
\(\widetilde q_n^*:Q^*\to R_n\) in the same way from
\(\widetilde q_n\).  These are well-defined homomorphisms, \(\widetilde q_n^*|_{P^*}=q_n^*\) and
\[
 q_n^*(h)=1
 \quad\Longleftrightarrow\quad
 \{i\in I:q_n(h_i)=1\}\in\mathcal D
\]
for \(h=[h_i]_{\mathcal D}\in P^*\).

\smallskip
\emph{The finite-quotient kernel.}
Let 
\[
 \rho_n=q_n^*|_{\U},\qquad
 \rho=(\rho_n)_{n<\omega}:\U\longrightarrow
       \prod_{n<\omega}R_n,
 \qquad H=\ker(\rho).                                      \tag{4.4}
\]
 Under the displayed identification,
\(q_n^*|_\Gamma=q_n\), and hence \(\rho_n|_\Gamma=q_n\).  The maps \(q_n\)
separate points of \(F\), so
\(\rho|_\Gamma\) is injective and therefore \(H\cap\Gamma=\{1\}\).  By the
first isomorphism theorem 
\[
 [\U:H]\leq \left| \prod_{n<\omega}R_n \right| \leq 2^{\aleph_0}<\bar\kappa.                                 \tag{4.5}
\]

For \(h\in H\), choose a representative
\(h=[h_i]_{\mathcal D}\), and define
\[
 \alpha_h([y_i]_{\mathcal D})
   :=[\nu_{h_i}(y_i)]_{\mathcal D}\qquad(y_i\in F^+).
\]
This is well defined: if \(h=[h_i]=[h_i']\) and \(y=[y_i]=[y_i']\), then
\(h_i=h_i'\) and \(y_i=y_i'\) simultaneously on a
\(\mathcal D\)-large set, so the two output sequences agree there.  Since
every \(\nu_{h_i}\) is an \(L_0\)-automorphism of \(F^+\) fixing \(F\)
pointwise, \L o\'s's theorem
shows that \(\alpha_h\) is an \(L_0\)-automorphism of \(Q^*\) fixing \(P^*\)
pointwise; its inverse is defined coordinatewise by \(\nu_{h_i^{-1}}\).
Moreover,
\(\alpha_h(c)=[h_ic]_{\mathcal D}=hc\).  Thus
\[
 \alpha_h\in\operatorname{Aut}_{L_0}(Q^*/P^*),
 \qquad \alpha_h(c)=hc.                                     \tag{4.6}
\]

For each fixed \(n\), the equality \(q_n^*(h)=1\) means that $
 I_n(h):=\{i\in I:q_n(h_i)=1\}\in\mathcal D$. 
On \(I_n(h)\), the identity (*) gives
\(\widetilde q_n(\nu_{h_i}(y_i))=\widetilde q_n(y_i)\).
Therefore
\(\widetilde q_n^*(\alpha_h(y))=\widetilde q_n^*(y)\) for every \(y\in Q^*\).
 Thus
\(\widetilde q_n^*\circ\alpha_h=\widetilde q_n^*\) for every \(n\).
Consequently every element $\sigma$ of $
 K=\langle\alpha_h:h\in H\rangle$ 
fixes \(P^*\) pointwise and satisfies \(\widetilde q_n^*\circ \sigma =\widetilde q_n^*\) for every \(n\).

\smallskip
\emph{A \(K\)-invariant good coloring.}
Let \(Y\) be the set of \(K\)-orbits on \(Q^*\), and write \([y]_K\) for
the orbit of \(y\).  If \(\sigma\in K\), \(\gamma\in\Gamma\), and \(y\in Q^*\),
then \(\sigma\) is a group automorphism fixing \(\Gamma\) pointwise, so
\(\sigma(\gamma y)=\sigma(\gamma)\sigma(y)=\gamma\sigma(y)\).  Thus left
multiplication by \(\Gamma\) descends to an action on \(Y\): explicitly, $
  \gamma\mathbin{\cdot}[y]_K:=[\gamma y]_K$.
The preceding commutation identity shows that this definition is independent
of the representative \(y\) of the \(K\)-orbit.

This action is
free.  Indeed, if
\(\gamma[y]_K=[y]_K\), then
\(\gamma y=\sigma(y)\) for some \(\sigma\in K\).  Applying
\(\widetilde q_n^*\) gives $
 q_n(\gamma)\widetilde q_n^*(y)
 =\widetilde q_n^*(\gamma y)
 =\widetilde q_n^*(\sigma(y))
 =\widetilde q_n^*(y)$, 
because every \(\sigma\in K\) preserves \(\widetilde q_n^*\).  Cancelling
gives \(q_n(\gamma)=1\) for every \(n\); since the \(q_n\) separate points of
\(\Gamma\cong F\), we get \(\gamma=1\).

Let \(\pi:Q^*\to Y\) be the orbit projection.  It is injective on \(\U\),
because \(K\) fixes \(P^*\) pointwise.  Its image \(\pi(\U)\) is
\(\Gamma\)-invariant, since \(\Gamma\leq\U\) and
\(\gamma\cdot\pi(u)=\pi(\gamma u)\in\pi(\U)\) for
\(\gamma\in\Gamma\), \(u\in\U\).  Define \(\delta:\pi(\U)\to[4]\) by
\(\delta(\pi(u))=i\) if and only if \(\U \models C_i(u)\); injectivity makes this well
defined.  It is good: given \(u\in\U\) and \(i\in[4]\), choose \(j\in[3]\)
with \(\U \models C_i(a_{ij}u)\); then \(\Gamma\)-equivariance gives
\(\delta(a_{ij}\cdot\pi(u))=\delta(\pi(a_{ij}u))=i\).
By \Cref{lem:gluing}, extend \(\delta\) to a total good coloring
\(\widehat\delta:Y\to[4]\). For $y \in Q^*$, let $\textrm{col}(y):=\widehat\delta(\pi(y))$, and define \(C_i(y)\) on \(Q^*\) by
\(\textrm{col}(y)=i\).  Then $\textrm{col}$ is good by $\Gamma$-equivariance of $\pi$ (given \(y\in Q^*\) and \(i\in[4]\),
the goodness of \(\widehat\delta\) yields some \(j\in[3]\) such that $
  \widehat\delta(a_{ij}\cdot\pi(y))=i$, and $
  a_{ij}\cdot\pi(y)=\pi(a_{ij}y)$, hence \(\textrm{col}(a_{ij}y)=i\)), extends
the coloring of \(\U\) (if \(u\in\U\), then $\textrm{col}(u)=\widehat\delta(\pi(u))
      =\delta(\pi(u))$, 
which, by the definition of \(\delta\), is the original color of \(u\)), and is \(K\)-invariant (because
\(\pi(\sigma(y))=\pi(y)\) for \(\sigma\in K\)).

Thus \((Q^*, \textrm{col})\) is a
model of \(T_0^{\mathrm{col}}\) containing \(\U\) as an
\(L_{\mathrm{gc}}\)-substructure.  By \Cref{prop:model-companion}, embed
\(Q^*\) in an existentially closed
\(T_0^{\mathrm{col}}\)-model
\(V\models T_0^{\mathrm{gc}}\); identify \(Q^*\) with its image.
Since
\(\U\models T_0^{\mathrm{gc}}\), model-completeness from 
\Cref{prop:model-companion} gives
\(\U\preccurlyeq_{L_{\mathrm{gc}}}V\); in particular,
\(V\models T^\sharp\).  Take a sufficiently saturated
\(W\succcurlyeq_{L_{\mathrm{gc}}}V\).  By quantifier elimination in \(T_0\) we thus have   
\[
 \U\preccurlyeq_{L_{\mathrm{gc}}} W,
 \qquad Q^*|_{L_0}\preccurlyeq W|_{L_0}.                    \tag{4.7}
\]

\smallskip
\emph{Stabilization of the type.}
Fix \(h\in H\) and a finite set \(A\subseteq\U\).  Since
\(Q^*|_{L_0}\preccurlyeq W|_{L_0}\),
\[
 \acl_0^W(A,c)=\acl_0^{Q^*}(A,c),\qquad
 \acl_0^W(A,hc)=\acl_0^{Q^*}(A,hc).
\]
By (4.6), \(\alpha_h\in\operatorname{Aut}_{L_0}(Q^*/P^*)\) sends \(c\) to
\(hc\); since \(A\subseteq\U\subseteq P^*\), it fixes \(A\) pointwise and carries
\(\acl_0^{Q^*}(A,c)\) onto \(\acl_0^{Q^*}(A,hc)\).  As \(\alpha_h\in K\)
and the coloring $\textrm{col}$ of $Q^*$ is \(K\)-invariant, this restriction preserves every color $C_i$.  \Cref{prop:relative-homogeneity} therefore gives
\(c\equiv_A^{L_{\mathrm{gc}}}hc\).  Every formula over \(\U\) has only
finitely many parameters, so
\[
 c\equiv_{\U}^{L_{\mathrm{gc}}}hc\qquad(h\in H).             \tag{4.8}
\]
Together with (4.5), this proves (4.2).
\end{proof}

\begin{proof}[Proof of \Cref{thm:simple-main}]
Let $
 L=L_{\mathrm{grp}}\cup\{C_1,C_2,C_3,C_4\}$ and $T=T^\sharp\!\upharpoonright L$. 
Thus \(T\) is a complete expansion of the theory of nonabelian free groups by
four unary predicates (note that \(\U \restriction_{L}\) is a monster model for \(T\)). Let \(c,H,W\) be supplied by \Cref{prop:bounded-orbit}, and let $
 p=\tp_L(c/\U)$. 
Equation (4.8) implies that
\(H\leq\Stab(p)\).  Hence
\[
 |\{g\cdot p:g\in\U\}|=[\U:\Stab(p)]
 \leq[\U:H]\leq2^{\aleph_0}<\bar\kappa.                  \tag{4.9}
\]
Simplicity passes from \(T^\sharp\) to its reduct \(T\).  The proof of
\Cref{lem:measure-obstruction} remains valid in \(L\), so the home-sort group is not definably amenable. 
\end{proof}

\section{Final remarks and future directions}

\begin{remark}\label{rem: SL2 no bdd orbit}
	The earlier four-colored \(G = \mathrm{SL}_2\) construction over an algebraically
closed field  \cite[Section~3]{CHK} does not already give a 
counterexample.  Indeed, for a monster model of a completion of that colored theory,
let \(K\) be the underlying algebraically closed field and put
\(G=\mathrm{SL}_2(K)\).  In the colored expansion the translate-covering axioms destroy
definable amenability, but there is still no global type in the colored
language with bounded orbit.  
Indeed, suppose that \(H\leq G\) and put
\([G:H]<\bar\kappa\), where $\U$ is $\bar\kappa$-saturated.  The kernel of the left action on \(G/H\) is the
normal core
\(N=\bigcap_{x\in G}xHx^{-1}\), so \(N\trianglelefteq G\) and \(N\leq H\).
Since this action embeds \(G/N\) in \(\operatorname{Sym}(G/H)\),
\([G:N]\leq2^{[G:H]+\aleph_0}<\bar\kappa\), so \(N\) also has bounded
index.  For the quotient map
\(\pi:G\to G/Z(G)=\mathrm{PSL}_2(K)\), abstract simplicity says that
\(\pi(N)\) is either trivial or all of \(\mathrm{PSL}_2(K)\).  The first
case would give \(N\leq Z(G)\), contradicting
\([G:N]\geq[G:Z(G)]=\bar\kappa\), since \(Z(G)=\{\pm I\}\).  In the second
case \(NZ(G)=G\), and hence
\(G/N\cong Z(G)/(Z(G)\cap N)\) is abelian.  But
\(G=\mathrm{SL}_2(K)\) is perfect, so \(G/N\) is also perfect and must be
trivial.  Therefore \(N=G\), and then \(H=G\). If \(p\in S_G(\U)\) had bounded orbit, then \(\Stab(p)\) would have bounded
index and hence equal \(G\).  Thus \(p\) would be a  left-invariant
\(\{0,1\}\)-valued Keisler measure, contradicting the covering axioms.
\end{remark}

\begin{problem}
Is there a counterexample to Petrykowski's conjecture of finite SU-rank? The theory of non-abelian free groups is not superstable by \cite{poizat1983groupes}.
\end{problem}

Recall that a definable group $G$  has 
\emph{finitely satisfiable generics} (fsg) if there are a global type
\(q\in S_G(\U)\) and a small model \(M\preccurlyeq\U\) such that every left
translate of \(q\) is finitely satisfiable in \(M\) \cite{hrushovski2008groups}.  In particular, the
orbit of \(q\) is bounded. 
\begin{remark}
The group we constructed in Theorem \ref{thm:simple-main} is not fsg.  First, no color \(C_i\) is
right generic.  Given \(g_1,\ldots,g_m\in\U\), compactness in the pure-group
reduct gives, in an elementary extension, an \(x\) for which the points
\(xg_k^{-1}\) lie in distinct new \(\Gamma\)-orbits.  Indeed, after deleting
repetitions, a finite fragment excludes only finitely many points, to keep the
orbits off \(\U\), and finitely many sets of solutions to
\(x^{-1}\gamma x=d\), with \(1\ne\gamma\in\Gamma\).  In the free group
\(\Gamma\), each latter set is empty or, after choosing one solution \(x_0\),
equal to \(C_\Gamma(\gamma)x_0\), hence is a coset of a cyclic subgroup.
Finitely many such cosets and points cannot cover \(\Gamma\): points are cosets
of the trivial subgroup, while every cyclic subgroup of
\(\Gamma\cong F_{12}\) has infinite index, and Neumann's lemma
says that in any finite cover of a group by cosets, at least one underlying
subgroup has finite index \cite[Lemma~4.1]{NeumannCosets}.
Give all the points \(xg_k^{-1}\) a fixed color different from \(i\).
Since they lie in pairwise distinct \(\Gamma\)-orbits disjoint from
\(G(\U)\), \Cref{lem:gluing} extends these
prescriptions together with the ambient coloring on \(G(\U)\) to a total
good coloring of the group of the elementary extension. Existential closedness then gives such an \(x\) already in \(\U\).  Thus
\(x\) lies outside every \(C_i g_k\), as required.

Now suppose that \(q\) witnessed fsg over a small model \(M\), and let
\(X\in q\).  For each \(g\in\U\), finite satisfiability of \(g\cdot q\) in
\(M\) gives \(m\in M\cap gX\), whence \(g\in mX^{-1}\).  Therefore
\(\U=\bigcup_{m\in M}mX^{-1}\); saturation gives a finite subcover, so \(X\)
is right generic.  But \(q\) contains one of the four colors \(C_i\), a
contradiction.  Hence the group is not fsg. 
\end{remark}

Thus the following problem remains open:
\begin{problem}
	\cite[Problem 3.31]{CGK} Is every definable fsg group in an arbitrary theory definably amenable? 
\end{problem}

\bibliographystyle{plain}
\bibliography{refs.bib}

\begin{thebibliography}{10}

\bibitem{CGK}
Artem Chernikov, Kyle Gannon, and Krzysztof Krupi{\'n}ski.
\newblock Definable convolution and idempotent {K}eisler measures {III}.
  {G}eneric stability, generic transitivity, and revised {N}ewelski's
  conjecture.
\newblock {\em Journal of the London Mathematical Society}, 114(1):e70639,
  2026.

\bibitem{CHK}
Artem Chernikov, Ehud Hrushovski, Alex Kruckman, Krzysztof Krupi{\'n}ski,
  Slavko Moconja, Anand Pillay, and Nicholas Ramsey.
\newblock Invariant measures in simple and in small theories.
\newblock {\em Journal of Mathematical Logic}, 23(2):2250025, 2023.

\bibitem{CS}
Artem Chernikov and Pierre Simon.
\newblock Definably amenable {NIP} groups.
\newblock {\em Journal of the American Mathematical Society}, 31(3):609--641,
  2018.

\bibitem{conversano2012connected}
Annalisa Conversano and Anand Pillay.
\newblock Connected components of definable groups and o-minimality {I}.
\newblock {\em Advances in Mathematics}, 231(2):605--623, 2012.

\bibitem{Hodges}
Wilfrid Hodges.
\newblock {\em Model theory}, volume~42 of {\em Encycl. Math. Appl.}
\newblock Cambridge: Cambridge University Press, 1993.

\bibitem{hrushovski2008groups}
Ehud Hrushovski, Ya'acov Peterzil, and Anand Pillay.
\newblock Groups, measures, and the {NIP}.
\newblock {\em Journal of the American Mathematical Society}, 21(2):563--596,
  2008.

\bibitem{KimPillay}
Byunghan Kim and Anand Pillay.
\newblock Simple theories.
\newblock {\em Ann. Pure Appl. Logic}, 88(2-3):149--164, 1997.

\bibitem{NeumannCosets}
B.~H. Neumann.
\newblock Groups covered by permutable subsets.
\newblock {\em Journal of the London Mathematical Society}, 29(2):236--248,
  1954.

\bibitem{Newelski}
Ludomir Newelski.
\newblock Bounded orbits and measures on a group.
\newblock {\em Isr. J. Math.}, 187:209--229, 2012.

\bibitem{NPcover}
Ludomir Newelski and Marcin Petrykowski.
\newblock Coverings of groups and types.
\newblock {\em Journal of the London Mathematical Society}, 71(1):1--21, 2005.

\bibitem{NPweak}
Ludomir Newelski and Marcin Petrykowski.
\newblock Weak generic types and coverings of groups {I}.
\newblock {\em Fundamenta Mathematicae}, 191(3):201--225, 2006.

\bibitem{PillayFree}
Anand Pillay.
\newblock Forking in the free group.
\newblock {\em J. Inst. Math. Jussieu}, 7(2):375--389, 2008.

\bibitem{poizat1983groupes}
Bruno Poizat.
\newblock Groupes stables, avec types g{\'e}n{\'e}riques r{\'e}guliers.
\newblock {\em The Journal of Symbolic Logic}, 48(2):339--355, 1983.

\bibitem{SelaStability}
Z.~Sela.
\newblock Diophantine geometry over groups. {VIII}: {Stability}.
\newblock {\em Ann. Math. (2)}, 177(3):787--868, 2013.

\bibitem{SklinosNFCP}
Rizos Sklinos.
\newblock The free group does not have the finite cover property.
\newblock {\em Isr. J. Math.}, 227(2):563--595, 2018.

\bibitem{stonestrom2026f}
Atticus Stonestrom.
\newblock On f-generic types in {NIP} groups.
\newblock {\em Advances in Mathematics}, 503:111222, 2026.

\end{thebibliography}
\end{document}